\newtheorem{theorem}{Theorem}
\theoremstyle{plain}
\newtheorem{example}{Example}
\newtheorem{lemma}{Lemma}
\numberwithin{equation}{section}
\numberwithin{theorem}{section}  
\numberwithin{proposition}{section}  
\numberwithin{lemma}{section}  
\numberwithin{corollary}{section}
\def \eq{\begin{equation}}
\def \eeq{\end{equation}}
\begin{document}
\title[Body Moving in a Kinetic Sea]{Convergence to equilibrium of a body
moving in a kinetic sea}
\author{Xuwen Chen}
\address{Department of Mathematics, University of Rochester, Rochester, NY
14627}
\email{chenxuwen@math.brown.edu}
\urladdr{http://www.math.rochester.edu/people/faculty/xchen84/}
\author{Walter A. Strauss}
\address{Department of Mathematics and Lefschetz Center for Dynamical
Systems, Brown University, Providence, RI 02912}
\email{wstrauss@math.brown.edu}
\urladdr{http://www.math.brown.edu/\symbol{126}wstrauss/}
\date{v2 for SIMA, 10/24/2015}
\keywords{kinetic, free boundary}
\subjclass[2010]{70F45, 35R35, 35Q83, 70F40}

\begin{abstract}
We consider a continuum of particles that are acted upon by an external
force $\mathbf{G}(t,\mathbf{x})$ and that collide with a rigid body. The
body itself is subject to a constant force $E$ as well as to the collective
force of interaction with the particles. We assume that the particles that
collide with the body reflect probabilistically with some probablility
distribution $K(v,u)$. Under certain conditions on $\mathbf{G}(t,\mathbf{x})$
and $K(v,u)$, we identify an equilibrium velocity $V_{\infty }$ of the body
and we prove that this equilibrium is asymptotically stable.
\end{abstract}

\maketitle

\section{Introduction}

In previous work of this type the particles have been subject to no force ($%
\mathbf{G}(t,\mathbf{x})\equiv 0$) and therefore travel with constant
velocities between their collisions with the rigid body. In this paper we
initiate an investigation of how some forces on the particles may affect the
motion of the body. What is the asymptotic effect of such particle forces on
the body? Our ultimate goal for the future is to treat a plasma in which the
particles are charged and thus are subject to electromagnetic forces, which
may be either external or created by the particles themselves. In this paper
the forces are external.

Our problem has a free boundary, the moving location of the body. The other
unknowns are the configuration and motion of the particles. The particles
may collide with the body diffusely. Boundary interactions in kinetic theory
are very poorly understood, even when the boundaries are fixed. Free
boundaries are even more difficult. Our most important assumption is that
the whole system, consisting of the body and the particles, starts out
rather close to an equilibrium state.

We consider classical particles that are extremely numerous and subject to
an outside force $\mathbf{G}$, which in this introductory study we assume to
be small and decaying. While one could consider modeling the particles as a
fluid, we instead model them as a continuum with a phase-space density $f$
as in kinetic theory \cite{Glassey, Sone, Spohn}. Furthermore, the
interaction of the particles with the body at its boundary can be quite
complicated in typical physical scenarios. For instance, the boundary may be
so rough that a particle may reflect from it in an essentially random way.
There could even be some kind of physical or chemical reaction between the
particle and the molecules of the body. Therefore we model the collisions by
a probability distribution $K$. This is sometimes called diffuse reflection.
Altogether, the prescribed data consists of the initial density $f_0$, the
force $\mathbf{G}$, the collision distribution $K$, and a constant force $E$
on the body.

The present paper is a sequel to \cite{CS-1, CS-2} and is also highly
motivated by a series of remarkable papers \cite{Ital3, Ital2, Ital1}. Most
of the earlier papers, including \cite{Ital3, Ital2, Ital1, CS-1, CS-2},
were devoted to proving more detailed behavior such as determining the
precise rate of approach to equilibrium. In particular, the earliest papers 
\cite{Ital1, Ital2} assumed specular reflection at the free boundary. On the
other hand, considering a perfect gas, the authors of \cite{Ital3} chose a
Maxwellian-like distribution for the reflected particles. More generally,
for the reasons mentioned above we considered in \cite{CS-1, CS-2} as well
as in this paper a general probabilistic law of reflection. In \cite{CS-1,
CS-2}, where $\mathbf{G}=0$, we found a condition on $K$ and $f_{0}$ that is
almost necessary and sufficient that the approach to equilibrium is
reversed; that is, the approach begins from the right side, say, and
ultimately approaches from the left. The present paper does not deal with
such detailed behavior, focusing instead on whether it approaches
equilibrium at all. Some numerical investigations, mostly dealing with a
diffuse reflection which ejects Maxwellians, have appeared in \cite{ATC, TA,
TA2, TAK}. Some other related investigations that deal with the speed of
approach to equilibrium in the specular reflection case are \cite%
{AG,Cav,CM2, Sisti}. See also the survey in \cite{Butta}. In all of the
preceding references, the particles are identical, the body is initially
moving near the equilibrium velocity $V_{\infty }$ and the motion of the
body is one-dimensional, even though the particles move in three (or $d$)
dimensions. The current paper is the first one for which the particles are
themselves subject to a force $\mathbf{G}\neq \mathbf{0}$.

We now describe our problem in mathematical terms. For simplicity we take
the body to be a vertical disk $D(t)$ centered at the point $\left(
X(t),0,0\right) $ and traveling with velocity $\left( V(t),0,0\right) $. We
assume there is a constant horizontal force $E\in \mathbb{R}$ acting on the
body, in addition to the horizontal frictional force $F(t)$ due to all the
colliding particles at time $t$. Thus 
\begin{equation*}
\frac{dX}{dt}=V(t),\quad \frac{dV}{dt}=E-F(t).
\end{equation*}%
The particle distribution $f(t,\mathbf{x},\mathbf{v})$ satisfies the Vlasov
equation 
\begin{equation}
\partial _{t}f+\mathbf{v}\cdot \nabla _{\mathbf{x}}f+\mathbf{G}(t,\mathbf{x}%
)\cdot \nabla _{\mathbf{v}}f=0  \label{pde:real}
\end{equation}%
in $\mathbb{R}^{3}\diagdown D(t)$. We assume the initial velocity $f(0,%
\mathbf{x},\mathbf{v})=f_{0}(\mathbf{v})$ depends only on $\mathbf{v}$ and
is even. At each collision with the body we assume the diffuse law of
reflection 
\begin{equation}
f_{+}(t,\mathbf{x},\mathbf{v})=\int_{(u_{1}-V(t))(v_{1}-V(t))\leq 0}K\left(
v_{1}-V(t);u_{1}-V(t)\right) \ f_{-}(t,\mathbf{x},u_{1},v_{\perp })\ du_{1}
\label{EQ:BC}
\end{equation}%
for $\mathbf{x\in }D(t)$, where the pre/postcollision distributions $f_{\pm
} $ are defined as%
\begin{equation*}
f_{\pm }(t,\mathbf{x},v)=\lim_{\varepsilon \rightarrow 0^{+}}f(t\pm
\varepsilon ,\mathbf{x}\pm \varepsilon \mathbf{v},\mathbf{v}).
\end{equation*}%
In \eqref{EQ:BC} we regard $\mathbf{u}$ as the velocity of a particle coming
into collision and $\mathbf{v}$ as an ejected velocity. We assume that the
collision kernel $K$ conserves mass during collisions. The form of (\ref%
{EQ:BC}) implies that there is no energy exchange between the disk and the
particles except in the horizontal direction. This is natural because the
disk is fixed to move only along the $x$-axis. More detailed assumptions on $%
\mathbf{G},f_{0}$ and $K$ are provided in Section \ref{sec:set up}. We note
that the $F(t)$ notation for the frictional force does not reflect the
complicated nature of the interaction. In fact, due to the recollisions
between the body and the particles, we will see in Lemma \ref{IntRep} that
the frictional force at time $t$ depends on the behavior of the whole
system, including both the disk and the particles, at all previous times.

It is far from obvious what the ultimate velocity $V_{\infty }$ will be. We
prove that $V_{\infty }$ is entirely determined by imagining a situation in
which both $\mathbf{G}=\mathbf{0}$ and there are no precollisions; that is,
the collisions at a time $t$ come from particles moving at constant
velocities that have not previously collided with the body. The limiting
velocity $V_{\infty }$ is determined as follows. We define a
\textquotedblleft fictitious" force $F_{00}$ for which there is no external
field $\mathbf{G}$ at all and the particles do not hit the boundary before
time $t$, namely, 
\begin{equation}
F_{00}(V)=\int_{D(t)}dS_{\mathbf{x}}\int_{\mathbb{R}^{3}}\tilde{L}%
(v_{1}-V(t))\ f_{0}(\mathbf{v})\ d\mathbf{v},  \label{def:F_00}
\end{equation}%
where $\tilde{L}$ is defined in (\ref{functionL}). It is a strictly monotone
function of $V$. We then define $V_{\infty }$ by the equation 
\begin{equation*}
F_{00}(V_{\infty })=E.
\end{equation*}%
%
%
%

Our main result is stated informally as follows. If the initial velocity $%
V(0)$ is sufficiently close to $V_{\infty }$, then there exists a solution $%
(V(t),f(t,\mathbf{x},\mathbf{v}))$ of our problem such that $V(t)\rightarrow
V_{\infty }$ as $t\rightarrow \infty $. The same asymptotic statement is
valid for any solution of the problem. This result is stated with precision
and further details in Section \ref{sec:set up}. The present result is the
first one for which the particles are themselves subject to a force $\mathbf{%
G}\neq \mathbf{0}$, so that between collisions with the body the individual
particles travel in curves rather than straight lines. As a consequence, we
have limited our investigation to proving the convergence (at some rate) to
the equilibrium velocity $V_{\infty }$.

We state the basic iteration scheme to obtain the main theorem in %
\eqref{approxeqV}. The core of the proof is a series of technical estimates.
We emphasize that adding an external force on the continuum of particles,
even considering only the rather ideal situation where the external force $%
\mathbf{G}(t,\mathbf{x})$ acting on them is small and decaying, makes the
problem much more difficult to deal with. Because we are interested in the
long-time behavior of the body, even a small force $\mathbf{G}$ could have
significant effects for large time which we explain below.

Relating the aforementioned $V_{\infty }$, which comes from an imaginary 
case in which there are no collisions and $\mathbf{G=0}$, to our system (for 
which there are collisions and $\mathbf{G\neq 0}$) is the most delicate and 
novel part of the analysis. 
In Section \ref{Sec:CharacteristicEstimates} we estimate the force $H(t)$ on the body at 
a large time $t$ due to $\mathbf{G}$ in the fictitious situation that there 
have been no precollisions. $%
H(t)$ is given by an integral over the various velocities $v$ of the
incoming particles and we prove that $H(t)$ is integrable over $0<t<\infty $%
. We estimate the integral in five pieces, using an intermediate time $0<T<t$%
. One piece $H_{1}$ is small (for large $t$) because $v-V_{\infty }$ is
small. The second piece $H_{21}$ is proven small by using the decay of $%
\mathbf{G}$ in time. The third piece $H_{221}$ makes use of the decay of $%
\mathbf{G}$ in both space and time. The fourth piece $H_{222}$ makes use of
the decay of $\mathbf{G}$ in space. The fifth piece $H_{223}$ particularly
makes use of the decay of $f_{0}$ as well as the decay of $\mathbf{G}$ in
time. There is a range of possible choices for the parameters $p,m,q$,
summarized in Theorem \ref{Prop:H is small}.

Furthermore, because the body is moving and the particles travel in curves, it
is evidently more difficult to track the collisions between the particles
and the body, in contrast to the $\mathbf{G}=\mathbf{0}$ case for which the
particles travel in straight lines.  A given particle can collide
with the body many times or even an infinite number of times. This is
partially discussed in Lemma \ref{IntRep} and at the beginning of Section %
\ref{Sec:recollisions}. Nevertheless, the effect of the precollisions can be
estimated by the most recent precollision. 
This delicate part of the proof is given in Lemma \ref{R_W estimate}.  
Finally, putting the different pieces
together is a matter of dealing with several competing small quantities, as
we discuss in Section \ref{Sec:existence}, where we combine the previous
estimates and prove the required convergence.

\subsection{Acknowledgements}

The authors would like to thank Kazuo Aoki once again for introducing them
to this general subject and Yan Guo for discussions about multiple
collisions. This research was supported in part by NSF grant DMS-1464869.


\section{Assumptions and preliminaries\label{sec:set up}}

We write the coordinates as $\mathbf{x}=(x_{1},x_{\perp
})=(x_{1},x_{2},x_{3}),\mathbf{v}=(v_{1},v_{\perp })=(v_{1},v_{2},v_{3}),%
\mathbf{G}=(G_{1},G_{\perp })=(G_{1},G_{2},G_{3})$. We begin by listing the
assumptions precisely. We assume that the force $\mathbf{G}$ has the form 
\begin{equation}
\mathbf{G}(t,\mathbf{x})=(G_{1}(t,x_{1}),G_{\perp }(t,x_{\perp }))
\label{eq:form of G}
\end{equation}%
and that it decays like 
\begin{equation}
\left\vert G_{1}(t,x_{1})\right\vert \leqslant \frac{c_{G}}{\left\langle
t\right\rangle ^{q}\left\langle x_{1}\right\rangle ^{m}}\text{ and }%
\left\vert G_{\perp }(t,x_{\perp })\right\vert \leqslant \frac{c_{G}}{%
\left\langle t\right\rangle ^{q}}  \label{eq:decay of G}
\end{equation}%
where $c_{G}$ is a constant, $q>2$ and $m>0$. The structural assumption (\ref%
{eq:form of G}) is crucial to our proof although we hope to weaken it in
future work. In the main theorem below we obtain a decay rate that does not
depend on the dimension because we refrain from making an even stronger
assumption. In fact, even with our decay assumption (\ref{eq:decay of G}) on 
$G_{\perp }$, some of the particles might collide with the disk many times
and might not even escape the disk at all.

We assume that the initial velocity distribution $f_{0}$ is a nonnegative
even $C^{1}$ function $\not\equiv 0$ that has the form 
\begin{equation}
f_{0}(\mathbf{v})=a_{0}(v_{1})\ b_{0}(v_{\perp })  \label{eq:form of f_0}
\end{equation}%
and that it decays like 
\begin{equation}
\left\vert \nabla _{\mathbf{v}}f_{0}(\mathbf{v})\right\vert \leqslant \frac{C%
}{\left\langle v_{1}\right\rangle ^{l_{1}+1}\left\langle v_{\perp
}\right\rangle ^{l_{2}+1}}  \label{decay:initial}
\end{equation}%
where $l_{1}>q+1$ and $l_{2}>1$. (We remark that it is sufficient to assume
the slightly weaker condition 
\begin{equation*}
\int_{\mathbb{R}}\left\vert \frac{\partial a_{0}}{\partial v_{1}}\right\vert
\langle v_{1}\rangle ^{q}dv_{1}<\infty ,\quad \int_{{\mathbb{R}}^{2}}|\nabla
b_{0}(v_{\perp })|dv_{\perp }<\infty
\end{equation*}%
although we will not bother to explicitly use this condition.)

Define $\gamma =|V(0)-V_{\infty }|$. This quantity is used throughout the
paper and will be assumed to be very small. We assume that $K(v_{1},u_{1})$
is a continuous nonnegative function, $C^{1}$ for $u_{1}\neq 0,v_{1}\neq 0$,
that is even in both variables separately ($%
K(v_{1},u_{1})=K(-v_{1},u_{1})=K(v_{1},-u_{1})$) and that is bounded for $%
v_{1}\in {\mathbb{R}},\left\vert u_{1}\right\vert \leq 3\gamma $. We impose
the mass conservation condition 
\begin{equation}
\int_{0}^{\infty }v_{1}K(v_{1},u_{1})dv_{1}=|u_{1}|,  \label{Kmass}
\end{equation}%
the power law condition 
\begin{equation}
\sup_{|u_{1}|<\gamma +c_{G}}\int_{0}^{\infty
}v_{1}^{2}K(v_{1},u_{1})dv_{1}\leq C|u_{1}|^{p}  \label{Kestimate}
\end{equation}%
with $0<p\leq 2$, and the integrability condition 
\begin{equation}
K(v_{1},z-y-V_{\infty })\ \left\langle z\right\rangle ^{-l_{1}}\leq
M(z)\quad \text{ for }|v_{1}|<3\gamma ,\ |y|<3\gamma ,\ |z|<\infty
\label{KIntegral}
\end{equation}%
where $M\in L^{1}(\mathbb{R})$. We also define 
\begin{equation}
L(u_{1})=u_{1}^{2}+\int_{{\mathbb{R}}}v_{1}^{2}K(v_{1},u_{1})dv_{1}\quad 
\text{ and }\quad \tilde{L}(u_{1})={\text{sgn}}(u_{1})L(u_{1}).
\label{functionL}
\end{equation}%
Because $K$ is even, $L(u_{1})$, as defined, is also even. In addition, we
assume that the even function $L(u_{1})$ is decreasing for $u_{1}<0$. Then
we define the collision operator as $\mathcal{K}_{t}(f_{-})=f_{+}$ as in %
\eqref{EQ:BC}. 

To summarize, the assumptions listed above have five exponents $p,m, q,
l_1,l_2$ that satisfy $0<p\le 2,\ q>2,\ m>0,\ l_1>q+1,\ l_2>1$. We define
another parameter $\sigma$ by 
\begin{equation}  \label{sigma}
\frac{1}{\sigma }=\frac{1}{p+1}+\frac{1}\mu, \qquad \mu = \min (m,q-1).
\end{equation}
We also have the two parameters $\gamma$ and $c_G$ in (\ref{eq:decay of G})
that will be chosen sufficiently small. 
Our main theorem is as follows.

\begin{theorem}
\label{ThExistence} Let $\mu >1+\frac{1}{p}$. There is a constant $A$ such
that if $\gamma =|V(0)-V_{\infty }|$ and $c_{G}$ are sufficiently small,
then there exists a solution $(V(t),f(t,\mathbf{x},\mathbf{v}))$ of our
problem for which $V\in C^{1}([0,\infty ))$, $f\in L^{\infty }([0,\infty
)\times {\mathbb{R}}^{3}\times {\mathbb{R}}^{3})$ and 
\begin{equation}
\left\vert V(t)-V_{\infty }\right\vert \leqslant \gamma e^{-b_{0}t}+\frac{%
A\gamma ^{p+1}}{(1+t)^{\sigma }}  \label{V decay}
\end{equation}%
where $b_{0}=\min_{V\in \left[ V_{\infty }-3\gamma ,V_{\infty }+3\gamma %
\right] }F_{00}^{\prime }(V)$. The pair of functions $f_{\pm }(t,\mathbf{x},%
\mathbf{v})$ are a.e. defined explicitly in terms of $V(t)$ and $f_{0}(%
\mathbf{v})$. Furthermore, any solution of the problem (in the sense stated
above) satisfies \eqref{V decay}.
\end{theorem}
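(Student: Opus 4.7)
The strategy is to reduce the coupled body--kinetic system to a scalar fixed-point problem for $V(t)$ via a Picard iteration of the form \eqref{approxeqV}. Given an approximation $V_n \in C([0,\infty))$ with $|V_n - V_\infty| \leq 3\gamma$, the characteristics of \eqref{pde:real} together with the diffuse boundary condition \eqref{EQ:BC} determine $f_n$ explicitly in terms of $V_n$ and $f_0$, which in turn defines a friction $F_n(t)$; the next iterate $V_{n+1}$ is defined by
\begin{equation*}
\frac{dV_{n+1}}{dt} = E - F_n(t), \qquad V_{n+1}(0) = V(0).
\end{equation*}
The whole argument then consists of (i) exhibiting an invariant ball for this map in a weighted space whose norm measures the decay rate \eqref{V decay}, and (ii) showing the map is contractive on that ball.

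To isolate the exponential term I would use $E = F_{00}(V_\infty)$ to rewrite
\begin{equation*}
E - F_n(t) = -\bigl(F_{00}(V_n(t)) - F_{00}(V_\infty)\bigr) + \bigl(F_{00}(V_n(t)) - F_n(t)\bigr) = -B_n(t)\,\bigl(V_n(t) - V_\infty\bigr) + R_n(t),
\end{equation*}
where $B_n(t) = \int_0^1 F_{00}'\bigl(V_\infty + \theta(V_n(t)-V_\infty)\bigr)\,d\theta \geq b_0$ on the invariant ball, and $R_n$ collects the two genuine corrections: the force $H_n(t)$ on the body produced by $\mathbf{G}$ along the fictitious no-precollision trajectories, and a correction $W_n(t)$ coming from actual precollisions. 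Integrating the resulting linear ODE yields
\begin{equation*}
V_{n+1}(t) - V_\infty = (V(0)-V_\infty)\, e^{-\int_0^t B_n(\tau)\,d\tau} + \int_0^t e^{-\int_s^t B_n(\tau)\,d\tau}\, R_n(s)\,ds,
\end{equation*}
so the exponential term $\gamma e^{-b_0 t}$ in \eqref{V decay} is automatic, and the whole matter reduces to showing $|R_n(t)| \leq C(c_G + \gamma^{p+1})(1+t)^{-\sigma}$.

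Bounding $H_n$ is the main analytic work, and I would follow the five-piece decomposition outlined in the introduction. Writing $H_n$ as an integral over incoming velocities and over times $s\in (0,t)$, fix an intermediate time $T\in (0,t)$ and split first by $|v-V_\infty| < \gamma$ versus $\geq \gamma$, producing $H_1$, then on the large-velocity set split further by $s<T$ versus $s>T$ and by $|x_1|$ small versus large, producing $H_{21}, H_{221}, H_{222}, H_{223}$. Piece $H_1$ is controlled by \eqref{Kestimate} and the $|v-V_\infty|^{p}$ gain; $H_{21}$ uses only the temporal decay $\langle t\rangle^{-q}$ of $\mathbf{G}$; $H_{221}$ combines temporal and spatial decay of $\mathbf{G}$; $H_{222}$ uses the spatial decay $\langle x_1\rangle^{-m}$; and $H_{223}$ uses the velocity decay of $f_0$ from \eqref{decay:initial} together with the time decay of $\mathbf{G}$. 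Optimizing $T=T(t)$ against the parameters $(p,q,m)$ gives $|H_n(t)| \leq C c_G(1+t)^{-\sigma}$ with $\sigma$ as in \eqref{sigma}, which is the content of Theorem \ref{Prop:H is small}.

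The contribution $W_n$ from genuine precollisions is more subtle because a single particle may recollide an unbounded number of times: even a small $\mathbf{G}$ bends trajectories and does not prevent returns, a point already visible in Lemma \ref{IntRep}. The key is Lemma \ref{R_W estimate}, which reduces the full contribution to that of the \emph{most recent} precollision: each additional collision produces a factor $|u_1|^p$ from \eqref{Kestimate}, so when the incoming speed is $O(\gamma)$ the series of multi-collision contributions telescopes into an estimate controlled by the one-collision term, which is in turn bounded like $H_n$ but with coefficient $O(\gamma^{p+1})$. The assumption $\mu > 1 + 1/p$ is precisely what is needed so that the loss of one power of $t$ per collision can be absorbed without destroying the $(1+t)^{-\sigma}$ rate. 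Combining the two bounds gives $|R_n(t)| \leq C(c_G + \gamma^{p+1})(1+t)^{-\sigma}$ with a constant independent of $n$, which substituted back closes the invariant ball and yields \eqref{V decay} for $V_{n+1}$. Applying the same machinery to the difference $V_{n+1}-V_n$ gives a contraction for $\gamma$ and $c_G$ sufficiently small, producing both the unique fixed point $V$ and, through the explicit characteristic formulas, the solution $f$ with the stated regularity; uniqueness of solutions to the original problem follows by running the linearized estimate on the difference of two hypothetical solutions. The main obstacle throughout is the recollision step: a naive term-by-term bound diverges, and only the telescoping of Lemma \ref{R_W estimate}, combined with the condition $\mu > 1+1/p$, preserves the polynomial decay rate needed to close the iteration.
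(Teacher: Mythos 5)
Your overall architecture matches the paper's (fixed point in the velocity, splitting the force into the fictitious-force difference $H$ and the recollision force $R$, a five-piece estimate of $H$, a separate recollision lemma), but three steps as written would fail. First, the displayed scheme $\frac{dV_{n+1}}{dt}=E-F_n(t)$ followed by the rewriting $E-F_n=-B_n(t)(V_n-V_\infty)+R_n(t)$ has the damping acting on the \emph{old} iterate, so integrating gives $V_{n+1}(t)-V_\infty=\gamma-\int_0^t B_n(V_n-V_\infty)\,ds+\int_0^t R_n\,ds$, with no exponential factor; your subsequent Duhamel formula is only valid for a scheme in which the coefficient is evaluated at $W=V_n$ but multiplies $V_{n+1}-V_\infty$, which is exactly what \eqref{approxeqV} does and what your displayed ODE does not. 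Second, the cutoff defining $H_1$ cannot be the fixed set $|v-V_\infty|<\gamma$: that yields $|H_1|\lesssim \gamma^{p+1}$ uniformly in $t$, which neither decays nor integrates. The paper takes the shrinking window $|v_1-V_\infty|\le bT/t$ with $T=t^\alpha$, so that $H_1\lesssim (T/t)^{p+1}=t^{-(1-\alpha)(p+1)}$; the same $T$ then enters the $s<T$ versus $s>T$ split and the optimization over $\alpha$ that produces $\sigma$.

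Third, the closing step is not a contraction. The paper proves only that $W\mapsto V_W$ maps a convex compact subset $\mathcal{K}$ of $C_b([0,\infty))$ into itself and is \emph{continuous}, then invokes Schauder. Even continuity is delicate: because a trajectory can recollide infinitely often, $R_W$ is not an explicit finite expression in $W$, and the paper must truncate to at most $N$ collisions (using that the $\ge N$-collision contribution is $O((C\gamma)^N)$ by iterating \eqref{eq:R_W estimate}) before passing to the limit $W_j\to W$. A Lipschitz estimate for $R_W$ in sup norm, which your contraction and your uniqueness argument both require, is not established and is substantially harder; the paper does not claim uniqueness, only that every solution satisfies \eqref{V decay}, which it gets by a continuation argument from Lemma \ref{coro:V_WinW} applied to the solution viewed as a fixed point. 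Two smaller misattributions: the hypothesis $\mu>1+\frac{1}{p}$ is used to make the exponents in the $H$ estimate exceed $1$ (so $\sigma>1$), not to absorb losses from repeated collisions; and Lemma \ref{R_W estimate} does not telescope a series over collision counts, but rather closes a self-referential sup bound $a_+^\ast\le C_1A(\gamma+c_G)a_+^\ast+C_2$ on the post-collision density and then exploits that recolliding horizontal velocities are confined to an interval of length $O((\gamma+c_G)\langle t\rangle^{-1})$ about $W(t)$, which with $L(u)\sim|u|^p$ yields the $(1+t)^{-(p+1)}$ rate.
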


We now mention three examples of collision kernels for which the reflected
velocity distribution is Gaussian for each incoming particle.

\begin{example}
Let 
\begin{equation*}
K(v_{1},u_{1})=2\beta e^{-\beta v_{1}^{2}}\left\vert u_{1}\right\vert .
\end{equation*}%
As shown in \cite{CS-1,CS-2}, $K(v_{1},u_{1})$ satisfies the assumptions
with $p=1$. In this case, we require $q>3$ and $m>2$ in (\ref{eq:decay of G}%
), and$\ l_{1}>4$ in (\ref{decay:initial}).
\end{example}

\begin{example}
We now choose 
\begin{equation*}
K(v_{1},u_{1})=2e^{-\frac{v_{1}^{2}}{\left\vert u_{1}\right\vert }}.
\end{equation*}%
It is proved in \cite{CS-1,CS-2} that this kernel checks our assumptions
with $p=\frac{3}{2}$. In this case, we need $q>8/3$ and$\ m>5/3$ in (\ref%
{eq:decay of G}), and$\ l_{1}>11/2$ in (\ref{decay:initial}). In this
example an incoming particle with almost the same velocity as the body is
likely to be reflected with almost the same velocity, while an incoming
particle with a very different velocity is reflected according to a very
wide Gaussian around $V(t)$.
\end{example}

\begin{example}
We now scale the last example by setting 
\begin{equation*}
K(v_{1}\mathbf{,}u_{1})=C\left\vert u_{1}\right\vert ^{\beta }e^{-{v_{1}^{2}}%
{\left\vert u_{1}\right\vert ^{\beta -1}}}
\end{equation*}%
where $\beta \in \left[ -1,3\right) $ and $C$ is chosen such that mass is
conserved, that is, (\ref{Kmass}) is satisfied. The most important feature
of this kernel is that, as $\beta $ runs through $\left[ -1,3\right) $, $p$
runs through $\left( 0,{2}\right] $. At the endpoints of this interval we
have the following behavior. If $p=2$, then $q>5/2$ and$\ m>3/2$ in (\ref%
{eq:decay of G})$,\ l_{1}>7/2$ in (\ref{decay:initial}). If $p\rightarrow 0$%
, then $q,m\rightarrow \infty $ in (\ref{eq:decay of G})$,\ l_{1}\rightarrow
\infty $ in (\ref{decay:initial}).
\end{example}

\bigskip We proceed by pointing out some basic properties of our model. The 
\textit{total mass} $M=\int d\mathbf{x}\int d\mathbf{v}f(t,\mathbf{x},%
\mathbf{v})$ of the particles is an invariant. Indeed, it is obviously
invariant under the flow \eqref{pde:real} while it is also preserved under
collisions due to \eqref{Kmass}, as proven in \cite[Lemma 2.3]{CS-1}. The 
\textit{total horizontal force} at time $t$ due to the collisions is 
\begin{equation}
F(t)=\int_{D(t)}dS_{\mathbf{x}}\int_{{\mathbb{R}}^{3}}d\mathbf{v}\ \tilde{L}%
(v_{1}-V(t))\ f_{-}(t,\mathbf{x},\mathbf{v})
\end{equation}%
as shown in \cite[Lemma 2.2]{CS-1}. It is the sum of the force due to
particles on the right ($v_{1}<V(t)$) and the left ($v_{1}>V(t)$).

Our basic technique is to look for \textit{possible velocities} in the class 
$\mathcal{W}$, where we define $W\in \mathcal{W}$ if and only if $W(t)$ is
Lipschitz, $W(0)=V_{0}$, and 
\begin{equation}
\left\vert W(t)-V_{\infty }\right\vert \leqslant \gamma e^{-b_{0}t}+\frac{%
A\gamma ^{p+1}}{(1+t)^{\sigma }}  \label{def:W}
\end{equation}%
where the constant $A$ will be specified later in Lemma \ref{coro:V_WinW}
and the power $\sigma $ is given by \eqref{sigma}. Among other conditions to
be stated later, we require $A>1$ and $\gamma <<1$ so that $A\gamma <1$. For 
$W\in \mathcal{W}$ we define $X(t)$ as the primitive of $W(t)$: 
\begin{equation*}
\frac{dX}{dt}=W,\quad X(0)=0.
\end{equation*}%
Until the last section of this paper we will fix any possible velocity $W\in 
\mathcal{W}$.

We introduce the \textit{characteristics} of the PDE \eqref{pde:real} 
\textit{without collisions} as follows. 
Given $(t,\mathbf{x},\mathbf{v})$, define $\left( \mathbf{\check{x}}(s;t,%
\mathbf{x},\mathbf{v}),\mathbf{\check{v}}(s;t,\mathbf{x},\mathbf{v})\right) $
to be the solution to the characteristic equations 
\begin{eqnarray}
\frac{d\mathbf{\check{x}}(s;t,\mathbf{x},\mathbf{v})}{ds} &=&\mathbf{\check{v%
}}(s;t,\mathbf{x},\mathbf{v})  \label{eqn:characteristic} \\
\frac{d\mathbf{\check{v}}(s;t,\mathbf{x},\mathbf{v})}{ds} &=&\mathbf{G}(s,%
\mathbf{\check{x}}(s;t,\mathbf{x},\mathbf{v}))  \notag
\end{eqnarray}%
with final condition $\left( \mathbf{\check{x}}(t;t,\mathbf{x},\mathbf{v}),%
\mathbf{\check{v}}(t;t,\mathbf{x},\mathbf{v})\right) =\left( \mathbf{x},%
\mathbf{v}\right) $. In particular, for $\mathbf{G}(t,\mathbf{x})$ of the
separated form (\ref{eq:form of G}), the horizontal and vertical components
are decoupled: 
\begin{eqnarray}
(\check{x}_{1}(r;t,\mathbf{x},\mathbf{v}),\check{v_{1}}(r;t,\mathbf{x},%
\mathbf{v})) &=&(\check{x}_{1}(r;t,x,v_{1}),\check{v}_{1}(r;t,x,v_{1}))
\label{eq:simplified characteristic} \\
(\check{x}_{\perp }(r;t,\mathbf{x},\mathbf{v}),\check{v}_{\perp }(r;t,%
\mathbf{x},\mathbf{v})) &=&(\check{x}_{\perp }(r;t,x_{\perp },v_{\perp }),%
\check{v}_{\perp }(r;t,x_{\perp },v_{\perp })).  \notag
\end{eqnarray}

For any function $\phi $ on phase space, we define the \textit{solution
operator} $\mathcal{S}_{tr}$ as 
\begin{equation*}
(\mathcal{S}_{tr}\phi )(\mathbf{x},\mathbf{v})=\phi (\mathbf{\check{x}}(r;t,%
\mathbf{x},\mathbf{v}),\mathbf{\check{v}}(r;t,\mathbf{x},\mathbf{v})).
\end{equation*}%
Then of course $f_{NB}=\mathcal{S}_{t0}f_{0}$ solves the Vlasov equation 
\begin{eqnarray}
\left( \partial _{t}+\mathbf{v}\cdot \nabla _{\mathbf{x}}+\mathbf{G}(t,%
\mathbf{x})\cdot \nabla _{\mathbf{v}}\right) f_{NB} &=&0  \label{pde:free} \\
f_{NB}(0,\mathbf{x,v}) &=&f_{0}(\mathbf{v})  \notag
\end{eqnarray}%
in $\mathbb{R}^{3}$ with no boundary conditions. 

The fictitious force $F_{00}$, previously defined in (\ref{def:F_00}), is a
strictly monotone function of $V$, due to $L$ being an even function that is
decreasing for $u_{1}<0$. For completeness we state this as a lemma. As a
consequence, $V_{\infty }$ will be uniquely defined by $F_{00}(V_{\infty
})=E $.

\begin{lemma}
\label{Lem:F_00}Suppose $f_{0}(\mathbf{v})\geqslant 0$\ is even, continuous
and $\not\equiv 0$. If $L\in C^{1}$ is an even function with $L(0)=0$, and $%
L^{\prime }(u_{1})<0$ for $u_{1}\in (-\infty ,0)$, then $F_{00}(V)$ is an
increasing odd $C^{1}$ function of $V$.
\end{lemma}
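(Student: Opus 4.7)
My plan is to verify the three claims---odd, $C^{1}$, strictly monotone---in that order, each by an argument that exploits the structure $\tilde L(u_{1})=\sgn(u_{1})L(u_{1})$ together with the evenness of $L$ and of $f_{0}$ in $v_{1}$. Oddness is essentially a change of variable: the substitution $v_{1}\mapsto -v_{1}$ in
\[
F_{00}(-V)\;=\;\int_{D(t)}dS_{\mathbf{x}}\int_{\mathbb{R}^{3}}\tilde L(v_{1}+V)\,f_{0}(\mathbf{v})\,d\mathbf{v},
\]
combined with $f_{0}(-v_{1},v_{\perp})=f_{0}(v_{1},v_{\perp})$ and $\tilde L(-u_{1})=-\tilde L(u_{1})$, immediately produces $F_{00}(-V)=-F_{00}(V)$.

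For the $C^{1}$ assertion I would differentiate under the integral to obtain
\[
F_{00}'(V)\;=\;-\int_{D(t)}dS_{\mathbf{x}}\int_{\mathbb{R}^{3}}\tilde L'(v_{1}-V)\,f_{0}(\mathbf{v})\,d\mathbf{v}.
\]
The delicate point is that $\tilde L$ must itself be $C^{1}$ on all of $\mathbb{R}$, notwithstanding the $\sgn$ factor. This is rescued by the evenness of $L\in C^{1}$: differentiating $L(u_{1})=L(-u_{1})$ forces $L'(0)=-L'(0)=0$, whence $\tilde L'(u_{1})=\sgn(u_{1})L'(u_{1})$ for $u_{1}\neq 0$, $\tilde L'(0)=0$, and $\tilde L'$ is continuous across the origin. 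With $\tilde L'$ continuous, a standard dominated-convergence argument---justified by the integrability of $f_{0}$ against $\tilde L'$ that is tacitly available in this framework---legitimizes the differentiation under the integral, and continuity of $F_{00}'$ in $V$ follows from the same domination.

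Strict monotonicity then reduces to a sign check on the integrand. Evenness of $L$ together with $L'<0$ on $(-\infty,0)$ gives $L'>0$ on $(0,\infty)$, so $\tilde L'(u_{1})=\sgn(u_{1})L'(u_{1})>0$ for every $u_{1}\neq 0$. Since $f_{0}\geq 0$ is continuous and not identically zero, it is strictly positive on a nonempty open set $U\subset\mathbb{R}^{3}$; the intersection of $U$ with the complement of the codimension-one slice $\{v_{1}=V\}$ has positive Lebesgue measure, so the integrand $\tilde L'(v_{1}-V)\,f_{0}(\mathbf{v})$ is strictly positive on a set of positive measure. Consequently $F_{00}'(V)$ has a definite nonzero sign for every $V$ and $F_{00}$ is strictly monotone.

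The only step that really deserves care is the $C^{1}$ behaviour of $\tilde L$ at the origin: without noting that evenness of $L\in C^{1}$ forces $L'(0)=0$, the $\sgn$ factor would threaten to introduce a jump in $\tilde L'$ at zero and block the differentiation under the integral. Every other part of the plan is either a symmetry argument or a routine dominated-convergence verification.
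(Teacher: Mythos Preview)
Your argument is correct and follows essentially the same strategy as the paper: a change of variable $v_{1}\mapsto -v_{1}$ for oddness, differentiation in $V$ for the $C^{1}$ claim, and the sign of $L'$ on $(-\infty,0)$ for strict monotonicity. The only organizational difference is in how the $\sgn$ factor in $\tilde L$ is handled. The paper splits the $v_{1}$–integral into the regions $\{v_{1}\le V\}$ and $\{v_{1}\ge V\}$ so as to work with $L$ rather than $\tilde L$; when differentiating, the boundary terms from the moving endpoint vanish because $L(0)=0$. You instead keep $\tilde L$ intact and observe that evenness of $L\in C^{1}$ forces $L'(0)=0$, so $\tilde L$ is genuinely $C^{1}$ across the origin and one may differentiate under the integral directly. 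Both devices are equivalent; yours is arguably cleaner and makes the regularity point explicit. One small remark: your formula $F_{00}'(V)=-\int\tilde L'(v_{1}-V)f_{0}\,d\mathbf v$ with $\tilde L'>0$ a.e.\ actually gives $F_{00}'<0$, i.e.\ strictly \emph{decreasing}; you were prudent to conclude only ``strictly monotone.'' This is all the paper needs (uniqueness of $V_{\infty}$), and the discrepancy with the word ``increasing'' in the statement traces to a sign convention in the paper's own proof rather than to any error on your part.
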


\begin{proof}
A simple change of variable yields 
\begin{eqnarray*}
F_{00}(V) &=&C\left( \int_{v_{1}\leqslant V}L(v_{1}-V)f_{0}(\mathbf{v})d%
\mathbf{v}-\int_{v_{1}\geqslant V}L(v_{1}-V)f_{0}(\mathbf{v})d\mathbf{v}%
\right) \\
&=&C\left( \int_{v_{1}\leqslant V}L(v_{1}-V)f_{0}(\mathbf{v})d\mathbf{v}%
-\int_{v_{1}\leqslant -V}L(v_{1}+V)f_{0}(\mathbf{v})d\mathbf{v}\right)
\end{eqnarray*}%
where $C$ is a fixed constant. Thus $F_{00}$ is odd and its derivative is 
\begin{equation*}
F_{00}^{\prime }(V)=-C\left( \int_{v_{1}\leqslant V}L^{\prime
}(v_{1}-V)f_{0}(\mathbf{v})d\mathbf{v}+\int_{v_{x}\leqslant -V}L^{\prime
}(v_{1}+V)f_{0}(\mathbf{v})d\mathbf{v}\right) >0.
\end{equation*}
\end{proof}

Given a time $t$, we will have to estimate the frictional force due to the
collisions of the particles. To that end, we define another
\textquotedblleft fictitious" force as follows (though not as "fictitious"
as $F_{00}$). The \textit{fictitious force} $F_{0}(t)$ for which no particle
hits the boundary before time $t$ is 
\begin{equation}
F_{0}(t)=F_{0}\left( t,W(t)\right) =\int_{D(t)}dS_{\mathbf{x}}\int_{\mathbb{R%
}^{d}}\tilde{L}(v_{1}-W(t))\ f_{NB}(t,\mathbf{x},\mathbf{v})\ d\mathbf{v}
\label{F0}
\end{equation}%
where $f_{NB}(t,\mathbf{x},\mathbf{v})$ solves the kinetic equation (\ref%
{pde:free}) with no boundary condition and is subject to the initial
condition $f_{NB}(0,\mathbf{x},\mathbf{v})=f_{0}(\mathbf{v})$. As before,
the fictitious force $F_{00}$ is defined so that there is no external field $%
\mathbf{G}$ at all \textit{and} the particles do not hit the boundary before
time $t$, namely, 
\begin{equation}
F_{00}=F_{00}\left( W(t)\right) =\int_{D(t)}dS_{\mathbf{x}}\int_{\mathbb{R}%
^{d}}\tilde{L}(v_{1}-W(t))\ f_{0}(\mathbf{v})\ d\mathbf{v}.  \label{F00}
\end{equation}%
%
%
%
%
%
%
%
%
%
%
%
The \textit{force due to the precollisions} on the body moving at velocity $%
W(t)$ then can be expressed as 
\begin{eqnarray*}
R_{W}(t) &=&F(t,W(t))-F_{0}(W(t)) \\
&=&\int_{D(t)}\int_{v_{1}\geqslant W(t)}L(v_{1}-W(t))\left[ f_{NB}(t,\mathbf{%
x},\mathbf{v})-f_{-}(t,\mathbf{x},\mathbf{v})\right] d\mathbf{v}dS_{\mathbf{x%
}} \\
&&+\int_{D(t)}\int_{v_{1}\leqslant W(t)}L(v_{1}-W(t))\left[ f_{-}(t,\mathbf{x%
},\mathbf{v})-f_{NB}(t,\mathbf{x},\mathbf{v})\right] d\mathbf{v}dS_{\mathbf{x%
}} \\
&\equiv &R_{W}^{L}(t)+R_{W}^{R}(t)
\end{eqnarray*}%
comprised of the forces on the right and left sides.

Given any $W\in \mathcal{W}$, we define the function $V_{W}(t)$ by the 
\textit{iteration scheme} 
\begin{equation}
\frac{dV_{W}}{dt}=\frac{E-F_{00}(W)}{V_{\infty }-W}(V_{\infty
}-V_{W})-R_{W}(t)+F_{00}(W)-F_{0}(t,W),\ \ V_{W}(0)=V_{0}.  \label{approxeqV}
\end{equation}%
Note that a fixed point (that is, $V_{W}=W=V$) would satisfy 
\begin{equation*}
\frac{dV}{dt}=\frac{E-F_{00}(V)}{V_{\infty }-V}(V_{\infty
}-V)-R_{V}(t)+F_{00}(V)-F_{0}(t,V)=E-R_{V}(t)-F_{0}(t,V)
\end{equation*}%
and $dX/dt=V$. Thus any fixed point solves our problem. We will estimate the
term $F_{00}(W)-F_{0}(t,W)$ in Section \ref{Sec:CharacteristicEstimates} and
estimate the term $R_{W}(t)$ in Section \ref{Sec:recollisions}.

A key difficulty in our problem is that a typical particle may collide with
the body many, or even infinitely many, times. The following lemma, written
for convenience in the 1D case, illustrates this difficulty. It explains how
one can represent the solution by an expansion in terms of a finite number $%
k $ of previous collisions. This expansion might never reach the initial
datum if there are infinitely many collisions; it could happen in many ways.

\begin{lemma}[Integral Representation]
\label{IntRep} Let $\left( x,v\right) \notin Z(t)$ where $Z(t)$ is the set
of measure zero defined in the beginning of Section \ref{Sec:recollisions}.
For brevity we write $\mathcal{B}_{tr}=\mathcal{S}_{tr}\mathcal{K}_{t}$ and $%
J(s,x,v)=-G(s,x)f_{0}^{\prime }(v)$. Given $t>0$, we denote the particle
collision times as $t>t_{1}>t_{2}>...$. 
Then for arbitrary $k\geqslant 2$ we can represent the solution to $(\ref%
{pde:real})$ subject to the collision law (\ref{EQ:BC}) by 
\begin{eqnarray}
f(t) &=&\sum_{j=1}^{k-1}\left\{ \mathcal{B}_{tt_{1}}...\mathcal{B}%
_{t_{j-1}t_{j}}\int_{\max (t_{j+1},0)}^{t_{j}}\mathcal{S}_{t_{j}s}J(s)ds%
\right\}  \label{f:multiple collision} \\
&&+\chi _{\left\{ 0<t_{k}\right\} }\mathcal{B}_{tt_{1}}...\mathcal{B}%
_{t_{k-1}t_{k}}f_{-}(t_{k})  \notag \\
&&+\int_{\max (t_{1},0)}^{t}\mathcal{S}_{ts}J(s)ds.  \notag
\end{eqnarray}
\end{lemma}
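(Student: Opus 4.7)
The plan is to prove (2.15) by induction on $k \geq 2$, with the base case $k = 2$ carrying the main structural idea and the induction step being a straightforward repetition. The representation is of Duhamel type: between two consecutive precollisions the density $f$ is transported along the characteristic system \eqref{eqn:characteristic}, while at each collision it is replaced by $\mathcal{K}$ applied to its incoming trace. The $J$-source contributions $\int \mathcal{S}_{t_j s}J(s)\,ds$ arise from the single auxiliary identity
\begin{equation*}
(\mathcal{S}_{\tau, r}f_0)(x,v) = f_0(v) + \int_r^\tau (\mathcal{S}_{\tau, s}J(s))(x,v)\,ds, \qquad r \leq \tau,
\end{equation*}
valid on any collision-free pair $(r,\tau)$, which follows by integrating
\begin{equation*}
\frac{d}{ds}\, f_0\bigl(\check v(s;\tau,x,v)\bigr) = f_0'(\check v(s))\,G(s,\check x(s)) = -J(s,\check x(s),\check v(s))
\end{equation*}
from $s=r$ to $s=\tau$. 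This identity is the mechanism that converts each ``free-streamed initial datum'' $\mathcal{S}_{\tau,0}f_0$ into $f_0$ plus a source integral in exactly the form appearing in (2.15).

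For the base case $k = 2$, fix $(x,v) \notin Z(t)$ and trace the backward characteristic, obtaining $t_1 > t_2 > \cdots$. On the collision-free segment $(\max(t_1,0),t)$ the Vlasov equation \eqref{pde:real} yields $f(t,x,v) = f(\max(t_1,0),\check x(\max(t_1,0)),\check v(\max(t_1,0)))$; combining this with the reflection $f_+(t_1) = \mathcal{K}_{t_1}f_-(t_1)$ when $t_1 > 0$ and with the identity above on each free-streaming subsegment produces the claimed expansion at depth two, where the indicator $\chi_{\{0<t_2\}}$ and the lower limits $\max(t_{j+1},0)$ track the three cases $t_1 \leq 0$, $t_1 > 0 \geq t_2$, and $t_2 > 0$. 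The inductive step $k \to k+1$ applies the same argument to the tail $f_-(t_k)$ on the segment $(\max(t_{k+1},0),t_k)$: this peels off one more $\mathcal{B}_{t_k t_{k+1}}$ operator and contributes one more source integral $\int_{\max(t_{k+1},0)}^{t_k}\mathcal{S}_{t_k s}J(s)\,ds$; composing with $\mathcal{B}_{tt_1}\cdots\mathcal{B}_{t_{k-1}t_k}$ reproduces the $(k+1)$-th form.

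The main obstacle is the measure-theoretic well-posedness of the precollision scheme, not the algebraic manipulation. Because the body is moving and the characteristics are curved under $\mathbf{G}$, some trajectories may touch the disk tangentially, and others may accumulate infinitely many collisions in finite time (Zeno behavior); both pathologies live in the exceptional set $Z(t)$ defined at the start of Section \ref{Sec:recollisions}, which has measure zero and therefore does not enter any of the force integrals in the remainder of the paper. Outside $Z(t)$, the sequence $\{t_j\}$ depends measurably on $(x,v)$, the indicator $\chi_{\{0<t_k\}}$ correctly truncates the iteration when the backward trajectory escapes $[0,t]$ in fewer than $k$ collisions, and every operator composition in (2.15) is well defined. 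The remaining bookkeeping — commuting pushforwards with Duhamel integrals in $s$, and justifying the placement of $\mathcal{K}_{t_j}$ relative to the source terms — is routine under the $C^1$ regularity of $f_0$, the continuity of $K$, and the decay assumptions \eqref{eq:decay of G} on $\mathbf{G}$.
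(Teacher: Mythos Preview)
Your proposal is correct and follows essentially the same route as the paper's own proof: an iterated Duhamel/characteristic representation, combining free transport $\mathcal{S}$ between consecutive precollisions with the collision operator $\mathcal{K}$ at each $t_j$, and proceeding by induction on $k$ (the paper also cites Guo's Lemma~24 for the same structure). Your auxiliary identity $\mathcal{S}_{\tau,r}f_0 = f_0 + \int_r^\tau \mathcal{S}_{\tau,s}J(s)\,ds$ is precisely what underlies the paper's ``standard Duhamel formula'' at each step, and your additional discussion of the measure-zero exceptional set $Z(t)$ and measurable dependence of the $t_j$ on $(x,v)$ makes explicit what the paper leaves to a citation.
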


\begin{proof}
It is important to note that the actual number of precollisions could be
larger than $k$ in formula (\ref{f:multiple collision}). Thus if there are
an infinite number of collisions, $f_{0}$ would never show up in (\ref%
{f:multiple collision}). 
Note also that each $t_{j}$ depends on $t,x,v$ as well as on the
pre/postcollision velocities at all $t_{i}$ for $1\leqslant i<j$. Because
the lemma is similar to \cite[Lemma 24]{Yan}, we only sketch the proof here.
It is instructive to first write out some of the terms in formula (\ref%
{f:multiple collision}) in total detail including all the variables. For
example, we have 
\begin{equation*}
\mathcal{B}_{tt_{1}}g(x,v)=\mathcal{S}_{t,t_{1}}\mathcal{K}g(x,v)=\int
du_{1}\ K(\check{v}(t_{1}^{+};t,x,v)-V(t_{1}),u_{1}-V(t_{1}))\
g(X(t_{1}),u_{1})
\end{equation*}%
and 
\begin{eqnarray*}
&&\mathcal{B}_{tt_{1}}\mathcal{B}_{t_{1}t_{2}}g(x,v)=\int du_{1}\text{ }K(%
\check{v}(t_{1}^{+};t,x,v)-V(t_{1}),u_{1}-V(t_{1})) \\
&&\times \int du_{2}\ K(\check{v}%
(t_{2}^{+};t_{1},X(t_{1}),u_{1})-V(t_{2}),u_{2}-V(t_{2}))g(X(t_{2}),u_{2})
\end{eqnarray*}%
where $t_{i}^{+}$ means $\lim_{t\rightarrow t_{i}^{+}}$, that is,
postcollision. In fact, exhibiting all the variables, we have 
\begin{eqnarray*}
&&\mathcal{B}_{tt_{1}}\dots \mathcal{B}_{t_{j-1}t_{j}}g(x,v)=\int du_{1}\ K(%
\check{v}(t_{1}^{+};t,x,v)-V(t_{1}),u_{1}-V(t_{1}))\times\dots \\
&&\times \int du_{j}\ K(\check{v}%
(t_{j}^{+};t_{j-1},X(t_{j-1}),u_{j-1})-V(t_{j}),u_{j}-V(t_{j}))\
g(X(t_{j}),u_{j}).
\end{eqnarray*}

We now begin the formal proof with the standard Duhamel formula 
\begin{equation*}
f(t)=\chi _{\left\{ 0<t_{1}\right\} }\mathcal{S}_{tt_{1}}\mathcal{K}%
_{t_{1}}f_{-}(t_{1})+\int_{\max (t_{1},0)}^{t}\mathcal{S}_{ts}J(s)ds.
\end{equation*}%
We then write the next Duhamel formula 
\begin{equation*}
f_{-}(t_{1})=\chi _{\left\{ 0<t_{2}\right\} }\mathcal{S}_{t_{1}t_{2}}%
\mathcal{K}_{t_{2}}f_{-}(t_{2})+\int_{\max (t_{2},0)}^{t_{1}}\mathcal{S}%
_{t_{1}s}J(s)ds.
\end{equation*}%
Combining the two formulas, we obtain 
\begin{equation*}
f(t)=\chi _{\left\{ 0<t_{2}\right\} }\mathcal{B}_{tt_{1}}\mathcal{B}%
_{t_{1}t_{2}}f_{-}(t_{2})+\mathcal{B}_{tt_{1}}\int_{\max (t_{2},0)}^{t_{1}}%
\mathcal{S}_{t_{1}s}J(s)ds+\int_{\max (t_{1},0)}^{t}\mathcal{S}_{ts}J(s)ds.
\end{equation*}
This is the case $k=2$. Iterating in the same manner by induction, that is,
replacing $f_{-}(t_{j})$ with its expression at the next level in order to
get $f_{-}(t_{j+1})$, we obtain formula (\ref{f:multiple collision}).
\end{proof}


\section{Effect of the particle forces on the body \label%
{Sec:CharacteristicEstimates}}

The difference between the two fictitious forces \eqref{F0} and \eqref{F00}
is%
\begin{equation*}
H(t)=H\left( V(t)\right) =F_{00}(t)-F_{0}(t)=\int_{D(t)}dS_{\mathbf{x}}\int_{%
\mathbb{R}^{d}}\tilde{L}(v_{1}-V(t))h(t,\mathbf{x},\mathbf{v})d\mathbf{v}
\end{equation*}%
where we denote 
\begin{equation*}
h(t,\mathbf{x},\mathbf{v})=f_{NB}(t,\mathbf{x},\mathbf{v})-f_{0}(\mathbf{v}).
\end{equation*}%
In this expression there is no collision with the body. The function $h$
solves 
\begin{eqnarray}
\left( \partial _{t}+\mathbf{v}\cdot \nabla _{\mathbf{x}}+\mathbf{G}(t,%
\mathbf{x})\cdot \nabla _{\mathbf{v}}\right) h &=&-\mathbf{G}(t,\mathbf{x}%
)\cdot \nabla _{\mathbf{v}}f_{0}  \label{eqn:h} \\
h(0,\mathbf{x},\mathbf{v}) &=&0  \notag
\end{eqnarray}%
or written in Duhamel form, 
\begin{equation*}
h(t,\mathbf{x},\mathbf{v})=\int_{0}^{t}\mathcal{S}_{ts}J(s)ds
\end{equation*}%
where we also denote 
\begin{equation*}
J(s)=J(s,\mathbf{x},\mathbf{v})=-\mathbf{G}(s,\mathbf{x})\cdot \nabla _{%
\mathbf{v}}f_{0}.
\end{equation*}%
For any $T\in \left[ 0,t\right] $, we could also write 
\begin{equation*}
h(t,\mathbf{x},\mathbf{v})=h(T,\mathbf{\check{x}}(T;t,\mathbf{x},\mathbf{v}),%
\mathbf{\check{v}}(T;t,\mathbf{x},\mathbf{v}))-\int_{T}^{t}\mathbf{G}(s,%
\check{\mathbf{x}}(s;t,\mathbf{x},\mathbf{v}))\cdot \nabla _{\mathbf{v}%
}f_{0}(\check{\mathbf{v}}(s;t,\mathbf{x},\mathbf{v}))\ ds.
\end{equation*}%
We claim that $H\left( t\right) $ is small and integrable in time so long as 
$\mathbf{G}$ is small. To prove such a claim, we first need the following
lemma. The constants denoted as $C$ occurring in the estimates throughout
this paper are independent of $\gamma ,c_{G},t$, and $A$ as well as the
solution. The constant $A$ will be chosen sufficiently large relative to one
of the constants $C$.


\begin{lemma}
\label{Lem:h is bounded} There is a constant $C>0$ independent of $c_{G}$, $%
\gamma $, $t$, $\mathbf{x}$\textbf{,} and $\mathbf{v}$, such that%
\begin{equation}
\left\vert h(t,\mathbf{x},\mathbf{v})\right\vert \leqslant c_{G}C\int_{0}^{t}%
\frac{1}{\left\langle s\right\rangle ^{q}}\frac{1}{\left\langle \check{x}%
_{1}(s;t,\mathbf{x},\mathbf{v})\right\rangle ^{m}}ds\ \frac{1}{\left\langle
v_{1}\right\rangle ^{l_{1}+1}}\frac{1}{\left\langle v_{\perp }\right\rangle
^{l_{2}+1}}\leq c_{G}C.  \label{estimate:h}
\end{equation}
\end{lemma}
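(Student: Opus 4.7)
The starting point is the Duhamel identity already stated in the excerpt,
\[
h(t,\mathbf{x},\mathbf{v})=\int_{0}^{t}(\mathcal{S}_{ts}J)(\mathbf{x},\mathbf{v})\,ds=-\int_{0}^{t}\mathbf{G}\bigl(s,\check{\mathbf{x}}(s;t,\mathbf{x},\mathbf{v})\bigr)\cdot\nabla_{\mathbf{v}}f_{0}\bigl(\check{\mathbf{v}}(s;t,\mathbf{x},\mathbf{v})\bigr)\,ds,
\]
so it suffices to estimate the integrand pointwise and integrate. Splitting $\mathbf{G}\cdot\nabla_{\mathbf{v}}f_{0}=G_{1}\,\partial_{v_{1}}f_{0}+G_{\perp}\cdot\nabla_{v_{\perp}}f_{0}$ and inserting \eqref{eq:decay of G} and \eqref{decay:initial} gives a bound of the form
\[
c_{G}\,C\,\langle s\rangle^{-q}\,\bigl(\langle\check{x}_{1}(s)\rangle^{-m}+1\bigr)\,\langle\check{v}_{1}(s)\rangle^{-l_{1}-1}\langle\check{v}_{\perp}(s)\rangle^{-l_{2}-1},
\]
with the ``$1$'' coming from the $G_{\perp}$-term which has no spatial decay.

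The next step is to replace the velocity weights $\langle\check{v}_{j}(s)\rangle^{-l_{j}-1}$ along the characteristic by the weights $\langle v_{j}\rangle^{-l_{j}-1}$ at the terminal velocity. For this I would integrate the characteristic ODE $d\check{\mathbf{v}}/dr=\mathbf{G}(r,\check{\mathbf{x}}(r))$ from $s$ to $t$, obtaining
\[
|\check{v}_{j}(s)-v_{j}|\leq \int_{s}^{t}|\mathbf{G}(r,\check{\mathbf{x}}(r))|\,dr\leq c_{G}\int_{0}^{\infty}\langle r\rangle^{-q}\,dr\leq c_{G}C,
\]
since $q>2>1$. Because $c_{G}$ is assumed small, this shift is $O(c_{G})$, so the elementary inequality
\[
\langle a\rangle\leq\langle b\rangle+|a-b|\quad\Longrightarrow\quad \langle\check{v}_{j}(s)\rangle^{-l_{j}-1}\leq C\,\langle v_{j}\rangle^{-l_{j}-1}
\]
holds with a constant $C$ independent of $c_{G},\gamma,t,\mathbf{x},\mathbf{v}$. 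Pulling these $v$-dependent factors out of the $ds$-integral yields the first inequality of \eqref{estimate:h} (up to absorbing the harmless additive ``$1$'' into the $\langle\check{x}_{1}\rangle^{-m}$ factor, which is legitimate since both give integrable time decay).

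The final bound $|h|\leq c_{G}C$ is then immediate: dropping $\langle\check{x}_{1}(s)\rangle^{-m}\leq 1$ and $\langle v_{j}\rangle^{-l_{j}-1}\leq 1$, we are left with $c_{G}C\int_{0}^{\infty}\langle s\rangle^{-q}\,ds<\infty$, which converges because $q>2$.

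I expect the only subtle step to be the velocity-weight comparison $\langle\check{v}_{j}(s)\rangle\sim\langle v_{j}\rangle$: one has to verify that the implicit constant is genuinely independent of $c_{G}$ once $c_{G}$ is fixed below some absolute threshold, and that the small-$|v|$ regime is handled correctly (both $\langle v_{j}\rangle$ and $\langle\check{v}_{j}(s)\rangle$ lie in a bounded interval there, so comparability is trivial). Everything else is a direct application of Fubini and the decay hypotheses on $\mathbf{G}$ and $\nabla_{\mathbf{v}}f_{0}$.
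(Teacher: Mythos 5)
Your argument follows the paper's proof essentially step for step: the Duhamel representation $h=\int_0^t\mathcal{S}_{ts}J\,ds$, a pointwise bound on the integrand from \eqref{eq:decay of G} and \eqref{decay:initial}, and then the replacement of $\check{v}_1(s),\check{v}_\perp(s)$ by $v_1,v_\perp$ via $|\check{v}_j(s)-v_j|\le\int|\mathbf{G}|\,dr\le c_GC$, exactly as in \eqref{v1hat}. The second inequality $|h|\le c_GC$ is correctly established.

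The one genuine gap is your treatment of the $G_\perp$ contribution. You are right --- and more careful than the paper, which silently asserts the full bound ``by assumption'' --- that $G_\perp\cdot\nabla_{v_\perp}f_0$ carries no spatial decay, so the honest pointwise bound on the integrand is
\begin{equation*}
c_GC\,\langle s\rangle^{-q}\bigl(\langle\check{x}_1(s)\rangle^{-m}+1\bigr)\langle\check{v}_1(s)\rangle^{-l_1-1}\langle\check{v}_\perp(s)\rangle^{-l_2-1}.
\end{equation*}
But the additive $1$ cannot be ``absorbed into the $\langle\check{x}_1\rangle^{-m}$ factor'': there is no constant $C$ with $1\le C\langle\check{x}_1(s)\rangle^{-m}$ uniformly in $\check{x}_1$, so the first inequality of \eqref{estimate:h} does not follow from your bound. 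Your justification (``both give integrable time decay'') conflates the pointwise inequality with its downstream use, and even downstream it matters: in the proof of Lemma \ref{Prop:H_22} the terms $H_{221}$ and $H_{222}$ extract their decay $\langle T\rangle^{-m}=\langle t\rangle^{-\alpha m}$ precisely from the factor $\langle x_1^*\rangle^{-m}$, while the $s$-integral there runs over $[0,T]$ and supplies no decay of its own, so a leftover constant term would destroy those estimates. To close the gap you must either restrict to the $G_1\,\partial_{v_1}f_0$ piece (handling the $G_\perp$ piece separately, e.g.\ by exploiting the product structure $f_0=a_0b_0$ and the decoupling \eqref{eq:simplified characteristic} of the perpendicular characteristics) or impose spatial decay on $G_\perp$ as well; as written, the absorption step is false.
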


\begin{proof}
We have 
\begin{equation*}
\mathcal{S}_{ts}J(s)=-\mathbf{G}(s,\mathbf{\check{x}}(s;t,\mathbf{x},\mathbf{%
v}))\cdot \left( \nabla _{\mathbf{v}}f_{0}\right) (\mathbf{\check{v}}(s;t,%
\mathbf{x},\mathbf{v})).
\end{equation*}%
So by assumption, 
\begin{equation*}
\left\vert \mathcal{S}_{ts}J(s)\right\vert \leqslant c_{G}C\frac{1}{%
\left\langle s\right\rangle ^{q}}\frac{1}{\left\langle \check{x}_{1}(s;t,%
\mathbf{x},\mathbf{v})\right\rangle ^{m}}\frac{1}{\left\langle \check{v}%
_{1}(s;t,\mathbf{x},\mathbf{v})\right\rangle ^{l_{1}+1}}\frac{1}{%
\left\langle \check{v}_{\perp }(s;t,\mathbf{x},\mathbf{v})\right\rangle
^{l_{2}+1}}.
\end{equation*}%
Hence 
\begin{eqnarray}
&&\left\vert h(t,\mathbf{x},\mathbf{v})\right\vert \leqslant
\int_{0}^{t}\left\vert \mathcal{S}_{ts}J(s)\right\vert ds
\label{m_estimate:h} \\
&\leqslant &c_{G}C\int_{0}^{t}\frac{1}{\left\langle s\right\rangle ^{q}}%
\frac{1}{\left\langle \check{x}_{1}(s;t,\mathbf{x},\mathbf{v})\right\rangle
^{m}}\frac{1}{\left\langle \check{v}_{1}(s;t,\mathbf{x},\mathbf{v}%
)\right\rangle ^{l_{1}+1}}\frac{1}{\left\langle \check{v}_{\perp }(s;t,%
\mathbf{x},\mathbf{v})\right\rangle ^{l_{2}+1}}ds.  \notag
\end{eqnarray}%
Finally notice that 
\begin{eqnarray}
\left\vert \check{v}_{1}(s;t,\mathbf{x},\mathbf{v})-v_{1}\right\vert
&\leqslant &\int \left\vert \mathbf{G}(p,\mathbf{\check{x}}(p;t,\mathbf{x},%
\mathbf{v}))\right\vert dp\leqslant 2\int \frac{c_{G}}{\left\langle
p\right\rangle ^{q}}dp\leqslant c_{G}C  \label{v1hat} \\
\left\vert \check{v}_{\perp }(s;t,\mathbf{x},\mathbf{v})-v_{\perp
}\right\vert &\leqslant &\int \left\vert \mathbf{G}(p,\mathbf{\check{x}}(p;t,%
\mathbf{x},\mathbf{v}))\right\vert dp\leqslant 2\int \frac{c_{G}}{%
\left\langle p\right\rangle ^{q}}dp\leqslant c_{G}C.  \notag
\end{eqnarray}%
So we can replace $\check{v}_{1}(s;t,\mathbf{x},\mathbf{v})$ and $\check{v}%
_{\perp }(s;t,\mathbf{x},\mathbf{v})$ in estimate (\ref{m_estimate:h}) by $%
v_{1}$ and $v_{\perp }$ at the price of a different constant $C$. Thereby we
obtain estimate (\ref{estimate:h}).
\end{proof}

The next two lemmas provide rather delicate estimates of $H(t)$.

\begin{lemma}
\label{Prop:H is small} We select $T=t^{\alpha }$ with $0<\alpha <1$ to be
determined later and assume $0<p\leq 2$ and the following (somewhat
redundant) decay conditions: 
\begin{equation}
\left( 1-\alpha \right) \left( p+1\right) >1  \label{res:H_1}
\end{equation}%
\begin{equation}
l_{1}>2\text{ and }\alpha \left( q-1\right) >1  \label{res:H_21}
\end{equation}%
\begin{equation}
\alpha m>1\text{, }q>2  \label{res:H_221}
\end{equation}%
\begin{equation}
l_{1}>q+1\text{ and }\alpha \left( q-1\right) >1.  \label{res:H_223}
\end{equation}%
Then the two fictitious forces $F_{0}$ and $F_{00}$ are near each other in
the sense that there is $C$ independent of $t$ such that%
\begin{equation*}
\left\vert H(t)\right\vert \leqslant c_{G}C\left[ \frac{1}{\left\langle
t\right\rangle ^{\left( 1-\alpha \right) \left( p+1\right) }}+\frac{1}{%
\left\langle t\right\rangle ^{\alpha m}}+\frac{1}{\left\langle
t\right\rangle ^{\alpha \left( q-1\right) }}\right] .
\end{equation*}
\end{lemma}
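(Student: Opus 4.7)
\medskip

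\noindent \textbf{Proof proposal.} The plan is to realize the five-piece decomposition advertised in the Introduction. The overall idea is that the integrand $\tilde L(v_1-V(t))\,h(t,\mathbf{x},\mathbf{v})$ is small for two complementary reasons: either $v_1$ is close to $V(t)$, in which case $\tilde L$ is small by \eqref{Kestimate}, or $v_1$ is far from $V(t)$, in which case the characteristic $\check x_1(s;t,\mathbf{x},\mathbf{v})$ is large or the time $s$ is large, and the decay of $\mathbf{G}$ produces smallness of $h$ via Lemma \ref{Lem:h is bounded}. Concretely, I would fix a velocity threshold $r=c_0\,t^{-(1-\alpha)}$ and split the $v_1$-integral at $|v_1-V(t)|=r$, obtaining $H=H_1+H_2$.

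\emph{Estimate of $H_1$.} On $|v_1-V(t)|\leq r$, for $\gamma,c_G$ small and $t$ large, the argument of $\tilde L$ lies in the regime where \eqref{Kestimate} applies, and since $p\leq 2$ one has $|\tilde L(u)|\leq u^2 + C|u|^p \leq C|u|^p$. Combined with the pointwise bound $|h|\leq c_G C$ of Lemma \ref{Lem:h is bounded}, the $v_1$-integration contributes $r^{p+1}$, the $v_\perp$-integral is bounded by \eqref{decay:initial}, and the surface integral over $D(t)$ is finite. This yields $|H_1|\leq c_G C\,t^{-(1-\alpha)(p+1)}$, matching the first term of the claim.

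\emph{Estimate of $H_2$.} On $|v_1-V(t)|>r$, insert the integral bound from Lemma \ref{Lem:h is bounded} and split at $T=t^\alpha$: call $H_{21}$ the piece coming from $s\in(T,t)$ and $H_{22}$ the piece from $s\in(0,T)$. For $H_{21}$, discard $\langle \check x_1\rangle^{-m}$ and use $\int_T^t\langle s\rangle^{-q}\,ds\leq C\,T^{-(q-1)}=C\,t^{-\alpha(q-1)}$; the remaining $v$-integration $\int|\tilde L(v_1-V(t))|\langle v_1\rangle^{-l_1-1}\,dv_1$ is finite because $l_1>2$ handles the $u^2$ part of $L$ and \eqref{KIntegral} handles the $K$ part. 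This gives $|H_{21}|\leq c_G C\,t^{-\alpha(q-1)}$.

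For $H_{22}$, the factor $\langle s\rangle^{-q}$ gives no $t$-decay, so smallness must come from the spatial weight $\langle\check x_1(s)\rangle^{-m}$. Integrating the characteristic equation \eqref{eqn:characteristic} and using \eqref{eq:decay of G}, $\check x_1(s)=x_1-v_1(t-s)+O(c_G)$, and since $x_1\in D(t)$ one has $x_1=X(t)$ close to $V_\infty t$, so
\[
\check x_1(s)\approx (V_\infty-v_1)(t-s)+V_\infty\,s.
\]
The two terms cancel near $s^\ast=V_\infty t/(v_1-V_\infty+V_\infty)$, and this near-disk window is where a naive use of the spatial decay fails. I would therefore subdivide $H_{22}$ into three subregions in $(s,v_1)$: $H_{221}$ where the particle is far from the origin, so that one can bound $\langle\check x_1(s)\rangle^{-m}\lesssim (|v_1-V_\infty|t)^{-m}$ and then integrate in $v_1$ over Region II (using $m>1$) to gain the extra factor $r^{1-m}$, which combines to $t^{-\alpha m - (1-\alpha)}\leq t^{-\alpha m}$; $H_{222}$ where one trades time decay for a pure spatial integral, changing variables $\tau=(v_1-V_\infty)(s-s^\ast)$ so that $\int \langle\tau\rangle^{-m}\,d\tau<\infty$ again yields the $t^{-\alpha m}$ scale; and $H_{223}$ where the particle is near the disk and the previous bounds fail, so one instead uses the decay of $f_0$ encoded in $\langle v_1\rangle^{-l_1-1}$ (with $l_1>q+1$) together with $\int_0^T\langle s\rangle^{-q}\,ds$ to recover a $t^{-\alpha(q-1)}$ contribution. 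Adding the three yields the remaining two terms in the claim.

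\emph{Main obstacle.} The routine parts are $H_1$ and $H_{21}$. The genuine difficulty is the partition of $H_{22}$: one must carefully track the window around $s^\ast$ where $\check x_1(s)$ can vanish, and apportion the decay of $\mathbf G$ in time, the decay of $\mathbf G$ in $x_1$, and the decay of $f_0$ in $v_1$ between three overlapping regions of $(s,v_1)$-space so that each of the subpieces is individually integrable and collectively sums to the stated bound. The precise geometry of that partition, and the appearance of the redundant conditions \eqref{res:H_221}--\eqref{res:H_223}, is essentially what dictates the final form of the estimate.
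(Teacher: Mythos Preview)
Your top-level decomposition $H=H_1+H_{21}+H_{22}$ and your estimates of $H_1$ and $H_{21}$ are correct and coincide with the paper's proof (the paper splits at $|v_1-V_\infty|\le bT/t$ rather than $|v_1-V(t)|\le r$, but since $|V(t)-V_\infty|\le C\gamma$ this is immaterial). The partition of $H_{22}$, however, differs from the paper's and contains a real gap.

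The paper does not organize $H_{22}$ around a critical time $s^*$; instead it first separates $|v_1|<1$ from $|v_1|\ge 1$. For $|v_1|<1$ one checks directly that $|\check x_1(s)|\ge T-C$ for \emph{all} $s\in(0,T)$, so $\langle\check x_1\rangle^{-m}\le CT^{-m}$ and the piece is $O(t^{-\alpha m})$. For $|v_1|\ge 1$ the paper splits by whether $|x_1^*|\ge (t/2)|v_1-V_\infty|$ (then $|x_1^*|>bT/2$, again giving $t^{-\alpha m}$) or $|x_1^*|<(t/2)|v_1-V_\infty|$. The crucial observation you are missing is in this last region: the inequality $|x_1^*|<(t/2)|v_1-V_\infty|$ combined with the characteristic formula forces a \emph{lower} bound $s\ge s_0\sim T/|\check v_1|$. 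Hence the $s$-integral is not $\int_0^T\langle s\rangle^{-q}ds=O(1)$ as you wrote, but rather $\int_{s_0}^\infty\langle s\rangle^{-q}ds\le C s_0^{-(q-1)}=C\,|v_1|^{q-1}T^{-(q-1)}$; the extra factor $|v_1|^{q-1}$ is then absorbed by $\langle v_1\rangle^{-l_1-1}$, which is exactly where the hypothesis $l_1>q+1$ enters and where the rate $t^{-\alpha(q-1)}$ comes from. Your $H_{223}$ sketch produces no $t$-decay because you omitted this lower bound on $s$, and your $H_{222}$ change of variables $\tau=(v_1-V_\infty)(s-s^*)$ yields at best a bound $C/|v_1|$ after the $s$-integration, not $t^{-\alpha m}$ (note also that $\check x_1(s)\approx v_1(s-s^*)$, not $(v_1-V_\infty)(s-s^*)$). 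The fix is precisely the paper's: replace the $s^*$-based geometry by the threshold $|x_1^*|\gtrless (t/2)|v_1-V_\infty|$ in the region $|v_1|\ge1$, and exploit the resulting lower bound on $s$.
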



\begin{proof}
Recall 
\begin{equation*}
H(t)=\int_{D(t)}dS_{\mathbf{x}}\int_{\mathbb{R}^{d}}\tilde{L}(v_{1}-W(t))h(t,%
\mathbf{x},\mathbf{v})d\mathbf{v}.
\end{equation*}%
First we consider $t\leq 1$. Note from \eqref{functionL} that 
\begin{equation}
\tilde{L}(v_{1}-W(t))\leq C\left( |v_{1}-W(t)|^{p}+|v_{1}-W(t)|^{2}\right)
\leq C\langle v_{1}\rangle ^{2}  \label{asymptotic:l}
\end{equation}%
because $p\leq 2$ and 
\begin{equation*}
\left\vert v_{1}-W(t)\right\vert \leq \left\vert v_{1}-V_{\infty
}\right\vert +\left\vert V_{\infty }-W(t)\right\vert \leqslant \left\vert
v_{1}\right\vert +\left\vert V_{\infty }\right\vert +\left( \gamma +A\gamma
\right) \leqslant C\langle v_{1}\rangle
\end{equation*}%
for $t\leq 1$. In the previous estimates we have used the assumption that $%
A\gamma \leqslant 1$ and $\gamma $ is small. By Lemma \ref{Lem:h is bounded}
we therefore have 
\begin{equation*}
|H(t)|\leq \int_{D(t)}dS_{\mathbf{x}}\int \langle v_{1}\rangle
^{2}Cc_{G}\langle v_{1}\rangle ^{-l_{1}-1}\langle v_{\perp }\rangle
^{-l_{2}-1}d\mathbf{v}\leq Cc_{G}
\end{equation*}%
because $l_{1}>2$ and $l_{2}>1$.

\textit{In the rest of the proof, we assume} $t>1$. We decompose $%
H(t)=H_{1}(t)+H_{2}(t)$, where%
\begin{eqnarray*}
H_{1}(t) &=&\int_{D(t)}dS_{\mathbf{x}}\int_{S}\tilde{L}(v_{1}-W(t))h(t,%
\mathbf{x},\mathbf{v})d\mathbf{v} \\
H_{2}(t) &=&\int_{D(t)}dS_{\mathbf{x}}\int_{S^{C}}\tilde{L}(v_{1}-W(t))h(t,%
\mathbf{x},\mathbf{v})d\mathbf{v}
\end{eqnarray*}%
and 
\begin{equation*}
S=\left\{ \mathbf{v\in }\mathbb{R}^{d}:\left\vert v_{1}-V_{\infty
}\right\vert \leqslant \frac{bT}{t}\right\} .
\end{equation*}%
The constant $b$ will be specified later and we choose $T=t^{\alpha }$.

The estimate for $H_{1}$ is fairly simple. In fact, 
we have by Lemma \ref{Lem:h is bounded} that 
\begin{eqnarray*}
|H_{1}(t)| &\leqslant &c_{G}C\int_{D(t)}dS_{\mathbf{x}}\int_{\left\vert
v_{1}-V_{\infty }\right\vert \leqslant \frac{bT}{t}}dv_{1}\tilde{L}%
(v_{1}-W(t))\int dv_{\perp }\frac{1}{\left\langle v_{\perp }\right\rangle
^{l_{2}+1}} \\
&\leqslant &c_{G}C\int_{|v_{1}-V_{\infty }|\leq \frac{bT}{t}%
}\{|v_{1}-V_{\infty }|^{p}+|v_{1}-V_{\infty }|^{2}+|V_{\infty
}-W(t)|^{p}+|V_{\infty }-W(t)|^{2}\}dv_{1} \\
&\leqslant &c_{G}Cb^{p+1}\left( \frac{T}{t}\right) ^{p+1}+c_{G}C\frac{1}{%
t^{p\sigma }}b\frac{T}{t}
\end{eqnarray*}%
where the last inequality comes from $p\leq 2$ and the definition of $%
\mathcal{W}$. Note that no extra decay can be obtained from the vertical
component involving $v_{\perp }$. Now $\min (m,q-1)>\frac{1}{\alpha }$ due
to (\ref{res:H_21}) and (\ref{res:H_221}), so that by \eqref{res:H_223} we
have $\sigma >1>1-\alpha $ and 
\begin{equation*}
(p+1)(1-\alpha )<p\sigma +1-\alpha \text{, for }p>0
\end{equation*}%
that is, the second term can be absorbed into the first one: 
\begin{equation}
|H_{1}(t)|\leqslant c_{G}C\left( \frac{1}{t^{\left( 1-\alpha \right) \left(
p+1\right) }}+\frac{1}{t^{p\sigma +(1-\alpha )}}\right) \leq c_{G}C\frac{1}{%
t^{\left( 1-\alpha \right) \left( p+1\right) }}  \label{estimate:H_1}
\end{equation}%
which is integrable in $t$ due to (\ref{res:H_1}).

We now turn our focus to $H_{2}(t)$, which is split into two parts $%
H_{2}=H_{21}+H_{22}$ as follows. 
\begin{eqnarray*}
H_{21}(t) &=&\int_{D(t)}dS_{\mathbf{x}}\int_{S^{C}}\tilde{L}(v_{1}-W(t))%
\left[ \int_{T}^{t}\mathcal{S}_{ts}J(s)ds\right] d\mathbf{v} \\
H_{22}(t) &=&\int_{D(t)}dS_{\mathbf{x}}\int_{S^{C}}\tilde{L}(v_{1}-W(t))h(T,%
\mathbf{\check{x}}(T;t,\mathbf{x},\mathbf{v}),\mathbf{\check{v}}(T;t,\mathbf{%
x},\mathbf{v}))d\mathbf{v}
\end{eqnarray*}%
where $h(T,\mathbf{\check{x}}(T;t,\mathbf{x},\mathbf{v}),\mathbf{\check{v}}%
(T;t,\mathbf{x},\mathbf{v}))=\int_{0}^{T}\mathcal{S}_{ts}J(s)ds.$ Now 
\begin{eqnarray}
|H_{21}(t)| &\leqslant &c_{G}C\int_{D(t)}dS_{\mathbf{x}}\int_{S^{C}}%
\int_{T}^{t}\tilde{L}(v_{1}-W(t))\frac{1}{\left\langle s\right\rangle ^{q}}%
\frac{1}{\left\langle v_{1}\right\rangle ^{l_{1}+1}}\frac{1}{\left\langle
v_{\perp }\right\rangle ^{l_{2}+1}}dsd\mathbf{v}  \label{estimate:H_21} \\
&\leqslant &c_{G}C\int_{T}^{t}\frac{1}{s^{q}}\int_{S^{C}}C\left(
|v_{1}-W(t)|^{2}+\left\vert v_{1}-W(t)\right\vert ^{p}\right) \frac{1}{%
\left\langle v_{1}\right\rangle ^{l_{1}+1}}\frac{1}{\left\langle v_{\perp
}\right\rangle ^{l_{2}+1}}d\mathbf{v}ds  \notag \\
&\leqslant &c_{G}C\int_{T}^{t}\frac{1}{\left\langle s\right\rangle ^{q}}%
ds\leqslant c_{G}C\frac{1}{T^{q-1}}=c_{G}C\frac{1}{t^{\alpha \left(
q-1\right) }}  \notag
\end{eqnarray}%
which is integrable over $t\in (0,\infty )$ if (\ref{res:H_21}) is
satisfied. We note that the constant $C$ in $H_{21}$ depends on $V_{\infty }$%
. It remains to estimate $H_{22}$, which we put into Lemma \ref{Prop:H_22}.
\end{proof}


\begin{lemma}
\label{Prop:H_22} Assuming the decay conditions (\ref{res:H_221}) and (\ref%
{res:H_223}), we have%
\begin{equation*}
\left\vert H_{22}(t)\right\vert \leqslant c_{G}C\left[ \frac{1}{\left\langle
t\right\rangle ^{\alpha m}}+\frac{1}{\left\langle t\right\rangle ^{\alpha
\left( q-1 \right) }}\right] \text{ and so } \int_0^\infty
|H_{22}(t)|dt<\infty.
\end{equation*}
\end{lemma}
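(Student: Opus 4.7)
The plan is to exploit that, on $S^{C}$, backward characteristics for $s\in[0,T]$ stay far from the disk, so the spatial decay of $\mathbf{G}$ gives small contribution; and also to use the time decay of $\mathbf{G}$ when $s$ is comparable to $T$. First I would apply Lemma \ref{Lem:h is bounded} to $h(T,\check{\mathbf{x}}(T;t,\mathbf{x},\mathbf{v}),\check{\mathbf{v}}(T;t,\mathbf{x},\mathbf{v}))$. Using the composition property $\check{x}_{1}(s;T,\check{\mathbf{x}}(T),\check{\mathbf{v}}(T))=\check{x}_{1}(s;t,\mathbf{x},\mathbf{v})$ for $s\leq T$ and the bound $|\check{\mathbf{v}}(T)-\mathbf{v}|\leq c_{G}C$ from \eqref{v1hat}, this yields
\[
|h(T,\check{\mathbf{x}}(T),\check{\mathbf{v}}(T))|\leq c_{G}C\int_{0}^{T}\frac{ds}{\langle s\rangle^{q}\langle \check{x}_{1}(s;t,\mathbf{x},\mathbf{v})\rangle^{m}}\cdot\frac{1}{\langle v_{1}\rangle^{l_{1}+1}\langle v_{\perp}\rangle^{l_{2}+1}}.
\]

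Next I would establish a first-order characteristic expansion for $\mathbf{x}\in D(t)$, writing $\check{x}_{1}(s)=X(t)-(t-s)v_{1}+\int_{s}^{t}(p-s)G_{1}(p,\check{x}_{1}(p))dp$ and bounding the remainder by $c_{G}C$ (using $q>2$, so $\int_{0}^{\infty}p\langle p\rangle^{-q}dp<\infty$). Combined with $|X(t)-V_{\infty}t|\leq C$ (which holds since $\sigma>1$ under $\mu>1+\tfrac{1}{p}$, making $W-V_{\infty}$ integrable), this gives
\[
\check{x}_{1}(s)=V_{\infty}s-(t-s)(v_{1}-V_{\infty})+O(1).
\]
Choosing $b\geq 4(|V_{\infty}|+1)$ in the definition of $S$, for $\mathbf{v}\in S^{C}$ and $s\in[0,T]$ this yields the crucial lower bound $|\check{x}_{1}(s)|\geq \tfrac{1}{2}(t-s)|v_{1}-V_{\infty}|$ for $t$ large.

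Then I would split $\int_{0}^{T}=\int_{0}^{T/2}+\int_{T/2}^{T}$. On $[0,T/2]$, $t-s\geq t/2$ gives $\langle \check{x}_{1}(s)\rangle^{m}\geq C(t|v_{1}-V_{\infty}|)^{m}$, so the integral is bounded by $C/(t^{m}|v_{1}-V_{\infty}|^{m})$ using $\int_{0}^{\infty}\langle s\rangle^{-q}ds<\infty$. On $[T/2,T]$, $\langle s\rangle^{-q}\leq CT^{-q}$ and the crude $\langle \check{x}_{1}\rangle^{-m}\leq 1$ give $CT^{1-q}=Ct^{-\alpha(q-1)}$. Inserting both pieces back into $H_{22}$, integrating over $v_{\perp}$ and $x_{\perp}$ (both finite), and noting $\tilde{L}(v_{1}-W(t))\leq C\langle v_{1}\rangle^{2}$, the second piece contributes $c_{G}Ct^{-\alpha(q-1)}$ since $l_{1}>q+1>3$ makes the $v_{1}$ integral finite. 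For the first piece I split $S^{C}$ at $|v_{1}-V_{\infty}|=1$: the outer region gives $Ct^{-m}$ using $l_{1}>2$, while the inner region $bT/t<|v_{1}-V_{\infty}|\leq 1$ contributes $Ct^{-m}(t/T)^{m-1}=Ct^{-1-\alpha(m-1)}\leq Ct^{-\alpha m}$ (because $\alpha\leq 1$ implies $1+\alpha(m-1)\geq \alpha m$).

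The main obstacle is the characteristic estimate: we must pick $b$ large enough (depending on $V_{\infty}$) so that the drift $V_{\infty}s$ is absorbed by $(t-s)(v_{1}-V_{\infty})$ uniformly over $s\in[0,T]$, yet small enough that $S$ itself does not dominate $H_{1}$'s estimate. A secondary delicate point is the boundary layer $|v_{1}-V_{\infty}|\sim bT/t$, where the exponent $1+\alpha(m-1)$ only matches $\alpha m$ through the precise inequality $\alpha<1$. Integrability in $t$ of the resulting bound $c_{G}C(t^{-\alpha m}+t^{-\alpha(q-1)})$ then follows directly from the hypotheses $\alpha m>1$ in \eqref{res:H_221} and $\alpha(q-1)>1$ in \eqref{res:H_223}.
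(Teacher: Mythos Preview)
Your proof is correct and proceeds along the same broad lines as the paper's---apply Lemma~\ref{Lem:h is bounded} at time $T$, then exploit the backward characteristic expansion $\check{x}_{1}(s)=X(t)-(t-s)v_{1}+O(1)$ to extract decay---but your decomposition is organized differently. The paper splits the integral into three regions in $(v_{1},x_{1}^{\ast})$-space: the piece $H_{221}$ where $|v_{1}|<1$ (so the drift $|\check v_{1}|s$ is controlled directly by $T$), the piece $H_{222}$ where $|x_{1}^{\ast}|\geq (t/2)|v_{1}-V_{\infty}|$ (so spatial decay applies immediately), and the piece $H_{223}$ where $|x_{1}^{\ast}|$ is small, forcing $s\gtrsim T/|v_{1}|$ and hence requiring $l_{1}>q+1$ to integrate the resulting $\langle v_{1}\rangle^{q-1}$ weight. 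You instead prove a single uniform lower bound $|\check{x}_{1}(s)|\geq\tfrac12(t-s)|v_{1}-V_{\infty}|$ on all of $S^{C}\times[0,T]$ (by absorbing the drift $V_{\infty}s$ into the choice of $b$), and then split only in $s$ at $T/2$ and in $v_{1}$ at $|v_{1}-V_{\infty}|=1$. Your route is somewhat cleaner and in fact uses only $l_{1}>2$ rather than the full $l_{1}>q+1$ of \eqref{res:H_223}; the paper's $H_{223}$ analysis is where that sharper hypothesis is genuinely consumed. Two minor remarks: your split at $s=T/2$ is not strictly needed (the characteristic lower bound already holds on all of $[0,T]$, giving the $t^{-\alpha m}$ term alone), and your invocation of $\sigma>1$ for $|X(t)-V_{\infty}t|\leq C$ is also used implicitly in the paper's proof at \eqref{x1-Tv1}, so neither approach is self-contained on the stated hypotheses of the lemma alone.
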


\begin{proof}
Because the $t\leqslant 1$ case has already been taken care of in Lemma \ref%
{Prop:H is small}, \textit{we restrict ourselves to $t>1$ in this proof}.
Recall that we have chosen $T=t^{\alpha }<t$. For brevity we denote 
\begin{equation*}
\mathbf{\check{x}}=\mathbf{\check{x}}(T;t,\mathbf{x},\mathbf{v}),\quad 
\mathbf{\check{v}}=\mathbf{\check{v}}(T;t,\mathbf{x},\mathbf{v}),
\end{equation*}%
and%
\begin{equation}
\mathbf{x}^{\ast }=\mathbf{\check{x}}(s;T,\mathbf{\check{x}},\mathbf{\check{v%
}})=\mathbf{\check{x}}(s;t,\mathbf{x},\mathbf{v}),\quad \mathbf{v}^{\ast }=%
\mathbf{\check{v}}(s;T,\mathbf{\check{x}},\mathbf{\check{v}}).=\mathbf{%
\check{v}}(s;t,\mathbf{x},\mathbf{v}).  \label{x*v*}
\end{equation}%
By Lemma \ref{Lem:h is bounded} with $(t,\mathbf{x},\mathbf{v})$ replaced by 
$(T,\mathbf{\check{x}},\mathbf{\check{v}})$, we have 
\begin{eqnarray}
|H_{22}(t)| &\leqslant &c_{G}C\int_{D(t)}dS_{\mathbf{x}}\int_{S^{C}}d\mathbf{%
v}\int_{0}^{T}ds\ \tilde{L}(v_{1}-W(t))\frac{1}{\left\langle s\right\rangle
^{q}}\frac{1}{\left\langle x_{1}^{\ast }\right\rangle ^{m}}\frac{1}{%
\left\langle v_{1}\right\rangle ^{l_{1}+1}}\frac{1}{\left\langle v_{\perp
}\right\rangle ^{l_{2}+1}}  \notag \\
&\leqslant &c_{G}C\int_{D(t)}dS_{\mathbf{x}}\int_{S^{C}}dv_{1}\int_{0}^{T}ds%
\ \tilde{L}(v_{1}-W(t))\frac{1}{\left\langle s\right\rangle ^{q}}\frac{1}{%
\left\langle x_{1}^{\ast }\right\rangle ^{m}}\frac{1}{\left\langle
v_{1}\right\rangle ^{l_{1}+1}}.  \label{H22}
\end{eqnarray}%
We split $H_{22}$ into three parts $H_{22}=H_{221}+H_{222}+H_{223}$, where $%
H_{221}$ is the integral over the set 
\begin{equation*}
\{\left\vert v_{1}\right\vert <1\}\cap S^{c},
\end{equation*}%
\ $H_{222}$ is the integral over 
\begin{equation*}
\{|x_{1}^{\ast }|\geq (t/2)|v_{1}-V_{\infty }|>bT/2\}\cap \{|v_{1}|\geq 1\},
\end{equation*}%
and $H_{223}$ is the integral over 
\begin{equation*}
\{|x_{1}^{\ast }|<(t/2)|v_{1}-V_{\infty }|\}\cap \{|v_{1}|\geq 1\}\cap S^{c}.
\end{equation*}%
%
%
%
%
%
%
%
%
%
%
%
%
%
%
%
%
%
%
%
%
%
%
%
%
%
%

We begin with $H_{221}$. Solving the characteristic equations (\ref%
{eqn:characteristic}) for $x_{1}^{\ast }$ with final data at time $T$ using %
\eqref{eq:decay of G} and \eqref{x*v*}, we have 
\begin{equation}
|x_{1}^{\ast }|\geq |\check{x}_{1}-T\check{v}_{1}+s\check{v}%
_{1}|-c_{2}(T-s)\geq |\check{x}_{1}-Tv_{1}|-T|\check{v}_{1}-v_{1}|-s|\check{v%
}_{1}|-c_{2}(T-s)  \label{x1*lowerbound}
\end{equation}%
where $c_{2}=O(c_{G})$. We estimate two of these terms as follows. On the
one hand, considering the time interval $(T,t)$ and using the characteristic
equation (\ref{eqn:characteristic}), we have 
\begin{equation*}
T\left\vert \check{v}_{1}-v_{1}\right\vert \leqslant CT\int_{T}^{t}\frac{1}{%
\left\langle p\right\rangle ^{q}}dp\leq \frac{1}{\left\langle T\right\rangle
^{q-2}}\leqslant C
\end{equation*}%
since $q>2$. On the other hand, for $t>1$ we have 
\begin{eqnarray}
|\check{x}_{1}-Tv_{1}| &\geq &t|V_{\infty }-v_{1}|-|\check{x}%
_{1}-X(t)-(T-t)v_{1}|-|X(t)-tV_{\infty }|  \notag \\
&\geq &bT-\int_{T}^{t}\int_{T}^{\tau }\frac{1}{\left\langle p\right\rangle
^{q}}dpd\tau -\int_{0}^{t}|W(\tau )-V_{\infty }|d\tau  \notag \\
&\geqslant &bT-C\frac{t}{\left\langle T\right\rangle ^{q-1}}-C(\gamma
+A\gamma )  \notag \\
&\geq &bT-C-C\frac{t}{\left\langle T\right\rangle ^{q-1}}\geqslant bT-C
\label{x1-Tv1}
\end{eqnarray}%
for some constant $C$, where we used the fact that 
\begin{equation*}
\int_{T}^{t}\left( \int_{T}^{\tau }\frac{1}{\left\langle p\right\rangle ^{q}}%
dp\right) d\tau =C_{q}\int_{T}^{t}\left( \frac{1}{\left\langle
T\right\rangle ^{q-1}}-\frac{1}{\left\langle \tau \right\rangle ^{q-1}}%
\right) d\tau \leqslant C_{q}\int_{T}^{t}\frac{d\tau }{\left\langle
T\right\rangle ^{q-1}}=C_{q}\frac{t-T}{\left\langle T\right\rangle ^{q-1}}%
\leqslant C_{q}t^{1-\alpha (q-1)}\leqslant C
\end{equation*}%
since $q>2$ and $\alpha (q-1)>1$. Therefore, combining \eqref{x1*lowerbound}
with the two previous inequalities and using $|\check{v}_{1}|\leq 1+c_{2}$,
we have 
\begin{equation*}
\left\vert x_{1}^{\ast }\right\vert \geq bT-C-c_{2}T-|\check{v}%
_{1}|s+c_{2}s\geq (2c_{2}+2)T-C-(1+c_{2})T\geq T-C
\end{equation*}%
for $0\leq s\leq T=t^{\alpha }$ if we choose $b>3c_{2}+2$. Plugging the
lower bound of $\left\vert x_{1}^{\ast }\right\vert $ given above into %
\eqref{H22}, we have 
\begin{eqnarray*}
|H_{221}(t)| &\leqslant &c_{G}C\int \int_{0}^{T}\frac{1}{\left\langle
s\right\rangle ^{q}}\frac{(\left\vert v_{1}-W(t)\right\vert ^{2}+\left\vert
v_{1}-W(t)\right\vert ^{p})}{\left\langle T\right\rangle ^{m}\left\langle
v_{1}\right\rangle ^{l_{1}+1}}dsdv_{1} \\
&\leqslant &c_{G}C\frac{1}{\left\langle T\right\rangle ^{m}}=c_{G}C\frac{1}{%
\left\langle t\right\rangle ^{\alpha m}}.
\end{eqnarray*}%
Since $\alpha m>1$, $|H_{221}(t)|$ is integrable in $t$, as desired.

Now for $H_{222}$, we have, by definition, that $|x_{1}^{\ast }|>bT/2.$
Hence 
\begin{equation*}
\left\vert H_{222}(t)\right\vert \leqslant c_{G}C\int \frac{1}{\left\langle
s\right\rangle ^{q}}ds\frac{(\left\vert v_{1}-W(t)\right\vert
^{2}+\left\vert v_{1}-W(t)\right\vert ^{p})}{\left\langle T\right\rangle
^{m}\left\langle v_{1}\right\rangle ^{l_{1}+1}}dv_{1}\leqslant c_{G}C\frac{1%
}{\left\langle T\right\rangle ^{m}}=c_{G}C\frac{1}{\left\langle
t\right\rangle ^{\alpha m}}
\end{equation*}%
which is also integrable in $t$.

Finally considering the term $H_{223}$ and using the characteristic equation
(\ref{eqn:characteristic}) in the interval $(s,T)$ as before, we have 
\begin{equation*}
\left\vert x_{1}^{\ast }\right\vert \geq |\check{x}_{1}-T\check{v}_{1}+s%
\check{v}_{1}|-c_{2}(T-s)\geq t|V_{\infty }-v_{1}|-|\check{x}%
_{1}-X(t)-(T-t)v_{1}|-|X(t)-tV_{\infty }|-s|\check{v}_{1}|-c_{2}T+c_{2}s
\end{equation*}%
where again $c_{2}=O(c_{G})$. Consideration of the interval $(T,t)$ provides
a constant bound of the second and third terms on the right side of this
inequality. In fact, we already proved in \eqref{x1-Tv1} that 
\begin{equation*}
|\check{x}_{1}-X(t)-(T-t)v_{1}|+|X(t)-tV_{\infty }|\leqslant C.
\end{equation*}%
Within the integration region $\left\{ \frac{t}{2}\left\vert v_{1}-V_{\infty
}\right\vert >|x_{1}^{\ast }|\right\} $ of $H_{223}$, we therefore have 
\begin{equation*}
\frac{t}{2}\left\vert V_{\infty }-v_{1}\right\vert >\left\vert x_{1}^{\ast
}\right\vert \geqslant t|V_{\infty }-v_{1}|-C-c_{2}T-s(|\check{v}_{1}|-c_{2})
\end{equation*}%
where $C$ is some fixed constant. Hence 
\begin{equation*}
s\geq \frac{1}{|\check{v}_{1}|-c_{2}}\left\{ \frac{t}{2}|V_{\infty
}-v_{1}|-C-c_{2}T\right\} \geq \frac{1}{|\check{v}_{1}|-c_{2}}\left\{ \left( 
\frac{b}{4}-c_{2}\right) T-C\right\} \geq \frac{c_{5}T}{|\check{v}_{1}|-c_{2}%
}\equiv s_{0},
\end{equation*}%
where we have used the definition of $S^{c}$ and where $c_{5}>0$ provided
that we choose $b$ large enough that $b>4C+4c_{2}$. Moreover, choosing $%
c_{G} $ small enough, we have by \eqref{v1hat} that 
\begin{equation*}
\left\vert \check{v}_{1}\right\vert -c_{2}\geqslant \left\vert
v_{1}\right\vert -\left\vert v_{1}-\check{v}_{1}\right\vert -c_{2}\geqslant
1-Cc_{G}-c_{2}>\tfrac{1}{2}.
\end{equation*}%
Thus from \eqref{H22} we have 
\begin{eqnarray*}
|H_{223}(t)| &\leq &c_{G}C\int_{s_{0}}^{\infty }\frac{1}{\left\langle
s\right\rangle ^{q}}ds\int_{0}^{\infty }\ \ \frac{(\left\vert
v_{1}-W(t)\right\vert ^{2}+\left\vert v_{1}-W(t)\right\vert ^{p})}{%
\left\langle v_{1}\right\rangle ^{l_{1}+1}}\ dv_{1} \\
&\leqslant &\frac{c_{G}C}{T^{q-1}}\int_{0}^{\infty }\ \frac{\langle
v_{1}\rangle ^{q-1}}{\left\langle v_{1}\right\rangle ^{l_{1}+1}}(\left\vert
v_{1}-W(t)\right\vert ^{2}+\left\vert v_{1}-W(t)\right\vert ^{p})\ dv_{1}.
\end{eqnarray*}%
Thus 
\begin{equation*}
|H_{223}(t)|\leqslant \frac{c_{G}C}{\left\langle T\right\rangle ^{q-1}}=%
\frac{c_{G}C}{\left\langle t\right\rangle ^{\alpha \left( q-1\right) }}
\end{equation*}%
because $l_{1}>q+1$. It is an integrable function of $t$ because $\alpha
(q-1)>1$.
\end{proof}

We summarize the estimates of $H(t)$ in the following theorem.

\begin{theorem}
\label{Theorem:H-summary} Assuming $\mu\equiv \min (m,q-1)>\frac{p+1}{p}$, 
we have $\left\vert H(t)\right\vert \leqslant {c_{G}C}{\left\langle
t\right\rangle ^{-\sigma }},$ where $\sigma $ is given by \eqref{sigma}.
\end{theorem}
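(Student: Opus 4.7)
The strategy is to combine the three-term bound from Lemma \ref{Prop:H is small} (together with Lemma \ref{Prop:H_22}), which says
\begin{equation*}
\left\vert H(t)\right\vert \leqslant c_{G}C\left[ \frac{1}{\left\langle t\right\rangle^{(1-\alpha)(p+1)}}+\frac{1}{\left\langle t\right\rangle^{\alpha m}}+\frac{1}{\left\langle t\right\rangle^{\alpha(q-1)}}\right],
\end{equation*}
and to optimize the free parameter $\alpha \in (0,1)$, then verify that the hypothesis $\mu > (p+1)/p$ is exactly what is needed to make all four side conditions (\ref{res:H_1})--(\ref{res:H_223}) hold simultaneously.

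First I would observe that the second and third exponents combine into $\alpha \mu = \alpha \min(m, q-1)$, so the bound reduces to $c_G C [\langle t \rangle^{-(1-\alpha)(p+1)} + \langle t\rangle^{-\alpha \mu}]$. To get the best common rate, I would balance these two decay exponents by solving $(1-\alpha)(p+1) = \alpha \mu$, which gives
\begin{equation*}
\alpha = \frac{p+1}{p+1+\mu}, \qquad (1-\alpha)(p+1) = \alpha \mu = \frac{\mu(p+1)}{p+1+\mu}.
\end{equation*}
A direct calculation then shows $\frac{1}{(1-\alpha)(p+1)} = \frac{p+1+\mu}{\mu(p+1)} = \frac{1}{p+1} + \frac{1}{\mu} = \frac{1}{\sigma}$, so the balanced exponent is precisely $\sigma$ as defined in (\ref{sigma}).

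Next I would check that the hypothesis $\mu > (p+1)/p$ is equivalent to $\sigma > 1$ (multiplying through by $p$ gives $\mu p > p+1$, i.e., $\mu + p + 1 < \mu(p+1)$, i.e., $1/\sigma < 1$). This handles condition (\ref{res:H_1}), since $(1-\alpha)(p+1) = \sigma > 1$. For (\ref{res:H_21}), (\ref{res:H_221}), and the second half of (\ref{res:H_223}), I need $\alpha m > 1$ and $\alpha(q-1) > 1$; but both are $\geq \alpha \mu = \sigma > 1$, so these follow from the same inequality. The remaining conditions $l_1 > 2$, $q > 2$, $l_1 > q+1$ are global hypotheses of the paper (\ref{eq:decay of G})--(\ref{decay:initial}) and require no further verification. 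Finally, $\alpha \in (0,1)$ is immediate since $\mu > 0$.

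I do not expect any real obstacle here: the work has already been done in Lemmas \ref{Prop:H is small} and \ref{Prop:H_22}, and this theorem is essentially a bookkeeping statement that records the optimal balance. The only mild subtlety is recognizing that the condition $\mu > (p+1)/p$ in the hypothesis is exactly equivalent to $\sigma > 1$, which simultaneously secures integrability in (\ref{res:H_1}) and dominates the other threshold conditions.
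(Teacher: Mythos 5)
Your proposal is correct and follows essentially the same route as the paper: take the three-term bound from Lemma \ref{Prop:H is small}, combine the latter two exponents into $\alpha\mu$, balance $(1-\alpha)(p+1)=\alpha\mu$ to get $\alpha=(p+1)/(p+1+\mu)$ and the common rate $\sigma$, and note that $\mu>(p+1)/p$ (equivalently $\sigma>1$) secures the side conditions. Your extra bookkeeping verifying each of (\ref{res:H_1})--(\ref{res:H_223}) is a slightly more explicit version of what the paper states in one line.
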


\begin{proof}
By Lemma \ref{Prop:H is small}, we have 
\begin{eqnarray*}
\left\vert H(t)\right\vert &\leqslant &c_{G}C\left[ \frac{1}{\left\langle
t\right\rangle ^{\left( 1-\alpha \right) \left( p+1\right) }}+\frac{1}{%
\left\langle t\right\rangle ^{\alpha m}}+\frac{1}{\left\langle
t\right\rangle ^{\alpha \left( q-1\right) }}\right] \\
&\leqslant &c_{G}C\left[ \frac{1}{\left\langle t\right\rangle ^{\left(
1-\alpha \right) \left( p+1\right) }}+\frac{1}{\left\langle t\right\rangle
^{\alpha \min (m,q-1)}}\right]
\end{eqnarray*}%
The inequality is optimized by equating the two powers. That is, we choose $%
\alpha ={(p+1)}/({p+1+\min (m,q-1)})$. Then we get the stated inequality. If 
$\min (m,q-1)>\frac{p+1}{p}$, we automatically have 
\begin{equation*}
\alpha \min (m,q-1) > 1, \quad (1-\alpha )(p+1) >1.
\end{equation*}
\end{proof}


\section{Effect of the precollisions on the body \label{Sec:recollisions}}

Because we have already estimated the term $F_{00}(W)-F_{0}(t,X,W)$ in
Theorem \ref{Theorem:H-summary}, we may now focus on the estimate of $%
R_{W}(t)$.

We begin with a couple of observations. First, recall from the earlier
discussion in Lemma \ref{IntRep} that a particle with position $\mathbf{x}$
and velocity $\mathbf{v}$ at time $t$ may have collided with the body at
various earlier times 
$t>t_{1}>t_{2}>\dots $. 
Given a particle colliding at time $t$, we \textit{define the infinite
collision set} $Z(t)$ to be comprised of all points $\left( \mathbf{x},%
\mathbf{v}\right) $ for which the particle also collides with the body at a
sequence of times $s_{j}\rightarrow t$. 
If $v_{1}(s)$ denotes the horizontal velocity of such a particle at time $s$%
, then 
\begin{equation*}
\int_{s_{j}}^{t}W(s)ds=\int_{s_{j}}^{t}v_{1}(s)ds
\end{equation*}
for the sequence of times $s_{j}\rightarrow t$, so that $W(t)=v_{1}(t)=v_{1}$%
. Thus such particles comprise a set of measure zero and so provide no
contribution to the force so that we may neglect them. For each of the
remaining particles the collision time $t$ is isolated.

Secondly, away from $Z(t)$, each particle that collides at time $t$ either
has a last precollision time $\tau (t,x,v)<t$ or else has no precollision at
all in the interval $(0,t)$. Accordingly, we can write the particle density
at time $t$ as the sum 
\begin{equation}
f_{-}(t,\mathbf{x},v)=\chi _{1}(t,\mathbf{v})f_{+}(\tau ,\mathbf{\check{x}}%
(\tau ;t,\mathbf{x},\mathbf{v}),\check{v}(\tau ;t,\mathbf{x},\mathbf{v}%
))+\chi _{0}(t,\mathbf{v})f_{0}(\check{v}(0;t,\mathbf{x,v}))
\label{eq:split of f-}
\end{equation}%
where the \textit{precollision characteristic functions} $\chi _{1}(t,%
\mathbf{v})$ and $\chi _{0}(t,\mathbf{v})$ are defined as follows. $\chi
_{1}(t,\mathbf{v})$ is the characteristic function of 
\begin{equation*}
\left\{ (t,\mathbf{v})\ \Big|\ \exists (\tau ,\mathbf{x})\in \left(
0,t\right) \times D(t)\text{ s.t. }\int_{\tau }^{t}W(s)ds=\int_{\tau }^{t}%
\check{v}_{x}(s;t,\mathbf{x},\mathbf{v})ds,\ \int_{\tau }^{t}\left\vert 
\check{v}_{\perp }(s;t,\mathbf{x},\mathbf{v})\right\vert ds\leqslant
2R\right\}
\end{equation*}%
with $R$ being the radius of the disk. Moreover, $\chi _{0}(t,\mathbf{v}%
)=1-\chi _{1}(t,\mathbf{v})$. The following lemma provides a crude estimate
on the velocity. 

\begin{lemma}
\label{Lem:recollision v range}If $\chi _{1}(t,\mathbf{v})\neq 0$, then%
\begin{equation*}
\inf_{s\leqslant t}\left\langle W\right\rangle _{s,t}-\frac{c_{G}C}{%
\left\langle t\right\rangle }\leqslant v_{1}\leqslant \sup_{s\leqslant
t}\left\langle W\right\rangle _{s,t}+\frac{c_{G}C}{\left\langle
t\right\rangle }
\end{equation*}%
where $\langle f\rangle _{s,t}$ denotes the average of $f$ over the interval 
$[s,t]$.
\end{lemma}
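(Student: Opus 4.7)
The plan is to unwind the definition of $\chi_1$ to obtain a precollision time $\tau\in(0,t)$, extract from the horizontal matching condition the identity $\langle W\rangle_{\tau,t}=\langle \check v_1\rangle_{\tau,t}$, and then show that $\langle \check v_1\rangle_{\tau,t}$ differs from $v_1$ by at most $c_GC/\langle t\rangle$ using the temporal decay of $\mathbf{G}$. Combining these two ingredients with the sandwich $\inf_{s\le t}\langle W\rangle_{s,t}\le \langle W\rangle_{\tau,t}\le\sup_{s\le t}\langle W\rangle_{s,t}$ will then yield the claim.

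Suppose $\chi_1(t,\mathbf{v})\ne 0$. By the defining condition, there exist $\tau\in(0,t)$ and $\mathbf{x}\in D(t)$ (so in particular $x_1=X(t)$) with $\int_\tau^t W(s)\,ds=\int_\tau^t \check v_1(s;t,\mathbf{x},\mathbf{v})\,ds$. Dividing by $t-\tau$ gives $\langle W\rangle_{\tau,t}=\langle \check v_1\rangle_{\tau,t}$. The characteristic equation \eqref{eqn:characteristic} with final condition at time $t$ yields $\check v_1(s)=v_1-\int_s^t G_1(r,\check x_1(r))\,dr$, and Fubini produces
$$\langle \check v_1\rangle_{\tau,t}=v_1-\frac{1}{t-\tau}\int_\tau^t G_1(r,\check x_1(r))\,(r-\tau)\,dr.$$

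It therefore remains to bound the second term on the right by $c_GC/\langle t\rangle$. Using \eqref{eq:decay of G}, its absolute value is at most $\frac{c_G}{t-\tau}\int_\tau^t(r-\tau)\langle r\rangle^{-q}\,dr$, which I would estimate by a case split. If $\tau\le t/2$, then $t-\tau\ge t/2$, and since $q>2$ the full integral $\int_0^\infty u\langle u\rangle^{-q}\,du$ is finite, giving a bound of order $c_G/t$. If instead $\tau>t/2$, then $\langle r\rangle\gtrsim\langle t\rangle$ throughout the integration interval, so a direct computation produces at most $c_G(t-\tau)/\langle t\rangle^q\le c_GC/\langle t\rangle$ since $q\ge 2$. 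For bounded $t$ the crude estimate $c_G(t-\tau)/2$ already matches $c_GC/\langle t\rangle$. The one delicate point is precisely this $\langle t\rangle^{-1}$ decay: the naive estimate $c_GC/(t-\tau)$ blows up as $\tau\to t^-$, and the case split above is what rescues a rate uniform in the choice of $\tau$.
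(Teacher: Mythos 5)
Your proof is correct and follows essentially the same route as the paper's: both extract a precollision time $\tau$, use the characteristic equation to express $\langle \check v_1\rangle_{\tau,t}$ as $v_1$ plus a double integral of $G_1$, and bound that remainder by $c_G C/\langle t\rangle$ uniformly in $\tau$. Your Fubini computation and the explicit $\tau\le t/2$ versus $\tau>t/2$ case split merely supply the details behind the bound $\sup_{0\le\tau\le t}\langle\langle s\rangle^{-q+1}\rangle_{\tau,t}\le C/\langle t\rangle$ that the paper asserts without proof.
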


\begin{proof}
Taking the last collision time $\tau =\tau (t,\mathbf{x},\mathbf{v})$ before 
$t$, we have from \eqref{eqn:characteristic} 
\begin{equation*}
(t-\tau )v_{1}-c_{G}\int_{\tau }^{t}\int_{s}^{t}\frac{1}{\left\langle
p\right\rangle ^{q}}dpds\leqslant \int_{\tau }^{t}\check{v}_{1}(s;t,\mathbf{%
x,v})ds\leqslant (t-\tau )v_{1}+c_{G}\int_{\tau }^{t}\int_{s}^{t}\frac{1}{%
\left\langle p\right\rangle ^{q}}dpds
\end{equation*}%
Now 
\begin{equation*}
0\leqslant \int_{\tau }^{t}\int_{s}^{t}\frac{1}{\left\langle p\right\rangle
^{q}}dpds\leqslant C\int_{\tau }^{t}\frac{ds}{\left\langle s\right\rangle
^{q-1}}-\frac{ds}{\left\langle t\right\rangle ^{q-1}}\leqslant C\int_{\tau
}^{t}\frac{ds}{\left\langle s\right\rangle ^{q-1}}=C(t-\tau )\left\langle
\left\langle s\right\rangle ^{-q+1}\right\rangle _{\tau ,t}
\end{equation*}%
with $C$ depending on $q$. Hence 
\begin{equation*}
v_{1}(t-\tau )-c_{G}C(t-\tau )\left\langle \left\langle s\right\rangle
^{-q+1}\right\rangle _{\tau ,t}\leqslant \int_{\tau }^{t}\check{v}_{1}(s;t,%
\mathbf{x,v})ds\leqslant v_{1}(t-\tau )+c_{G}C(t-\tau )\left\langle
\left\langle s\right\rangle ^{-q+1}\right\rangle _{\tau ,t}.
\end{equation*}%
Since $\int_{\tau }^{t}\check{v}_{1}(s;t,\mathbf{x,v})ds=(t-\tau
)\left\langle W\right\rangle _{\tau ,t}$ for $\chi _{1}$, we have%
\begin{equation*}
v_{1}-c_{G}C\left\langle \left\langle s\right\rangle ^{-q+1}\right\rangle
_{\tau ,t}\leqslant \left\langle W\right\rangle _{\tau ,t}\leqslant
v_{1}+c_{G}C\left\langle \left\langle s\right\rangle ^{-q+1}\right\rangle
_{\tau ,t}\text{.}
\end{equation*}%
That is%
\begin{equation*}
\inf_{0\leqslant \tau \leqslant t}\left( \left\langle W\right\rangle _{\tau
,t}-c_{G}C\left\langle \left\langle s\right\rangle ^{-q+1}\right\rangle
_{\tau ,t}\right) \leqslant v_{1}\leqslant \sup_{0\leqslant \tau \leqslant
t}\left( \left\langle W\right\rangle _{\tau ,t}+c_{G}C\left\langle
\left\langle s\right\rangle ^{-q+1}\right\rangle _{\tau ,t}\right) .
\end{equation*}%
Using the fact that%
\begin{equation*}
\sup_{0\leqslant \tau \leqslant t}\left\langle \left\langle s\right\rangle
^{-q+1}\right\rangle _{\tau ,t}\leqslant \frac{C}{\left\langle
t\right\rangle },
\end{equation*}%
with $q>2$, we have 
\begin{equation*}
\inf_{0\leqslant \tau \leqslant t}\left\langle W\right\rangle _{\tau ,t}-%
\frac{c_{G}C}{\left\langle t\right\rangle }\leqslant v_{1}\leqslant
\sup_{0\leqslant \tau \leqslant t}\left\langle W\right\rangle _{\tau ,t}+%
\frac{c_{G}C}{\left\langle t\right\rangle }.
\end{equation*}
\end{proof}

We next observe that both the collisions and the flow preserve the product
structure of the particle density.

\begin{lemma}
\label{Lem:form of f} The density $f(t,\mathbf{x},\mathbf{v})$ has the
product form $f(t,\mathbf{x},\mathbf{v})=a(t,x_{1},v_{1})\ b(t,x_{\perp
},v_{\perp })$. In fact, $b(t,x_{\perp },v_{\perp })=b_{0}(\check{v}_{\perp
}(0;t,x_{\perp },v_{\perp }))$.
\end{lemma}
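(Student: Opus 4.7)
The plan is to exploit two structural facts and combine them by induction on the number of precollisions. The first is that the separation of $\mathbf{G}$ in (\ref{eq:form of G}) yields the decoupling (\ref{eq:simplified characteristic}): $\check{v}_1$ depends only on $(t,x_1,v_1)$ and $\check{v}_\perp$ only on $(t,x_\perp,v_\perp)$. The second is that the collision law (\ref{EQ:BC}) integrates only over the horizontal incoming velocity $u_1$ and treats $v_\perp$ as a mute parameter. Together with the product initial datum $f_0(\mathbf{v})=a_0(v_1)b_0(v_\perp)$ from (\ref{eq:form of f_0}), these two facts will propagate a product structure through both collisional and collision-free evolution.

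First I would verify the free-flow step. If a particle has experienced no collision since a time $\tau\in[0,t]$, then transport along characteristics gives $f(t,\mathbf{x},\mathbf{v})=f_{+}(\tau,\check{\mathbf{x}}(\tau;t,\mathbf{x},\mathbf{v}),\check{\mathbf{v}}(\tau;t,\mathbf{x},\mathbf{v}))$ (or $f_0$ if $\tau=0$). Assuming inductively that $f_{+}$ at time $\tau$ has product form $a_{+}(\tau,x_1,v_1)\,b_{+}(\tau,x_\perp,v_\perp)$, the decoupling (\ref{eq:simplified characteristic}) yields a product at time $t$ as well. Moreover, because the perpendicular characteristic equation is driven only by $G_\perp(s,x_\perp)$, independently of the body and of the horizontal variables, the $b$ factor is transported by exactly the pullback along $(\check{x}_\perp,\check{v}_\perp)$.

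Next I would verify the collision step. If $f_{-}(t,\mathbf{x},\mathbf{v})=a_{-}(t,x_1,v_1)\,b_{-}(t,x_\perp,v_\perp)$ just before a collision, then inserting into (\ref{EQ:BC}) gives
\[
f_{+}(t,\mathbf{x},\mathbf{v})=b_{-}(t,x_\perp,v_\perp)\int_{(u_1-V)(v_1-V)\leq 0}K(v_1-V,u_1-V)\,a_{-}(t,x_1,u_1)\,du_1,
\]
which is again a product and in which the $b$ factor is unchanged across the collision.

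Finally I would iterate these two steps over the (possibly infinite) sequence of precollisions $t>t_1>t_2>\dots$, excluding the measure-zero set $Z(t)$ from the preceding discussion; this bookkeeping is conveniently organized by the expansion (\ref{f:multiple collision}) of Lemma \ref{IntRep}. Since collisions never alter $b$ and the perpendicular characteristic flow is independent of the body and of the horizontal component, composing all the successive perpendicular pullbacks between collisions collapses to the single pullback by $\check{v}_\perp(0;t,x_\perp,v_\perp)$, yielding $b(t,x_\perp,v_\perp)=b_0(\check{v}_\perp(0;t,x_\perp,v_\perp))$. The only conceptual subtlety is the possibility of infinitely many precollisions, but this is handled by the same truncation/limit procedure that validates (\ref{f:multiple collision}), so I do not anticipate a genuine obstacle.
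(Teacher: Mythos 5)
Your proposal is correct and follows essentially the same route as the paper: verify that the product structure is preserved separately under collision-free transport (via the decoupling \eqref{eq:simplified characteristic}) and under the collision law \eqref{EQ:BC} (which integrates only in $u_1$ and leaves the perpendicular factor untouched), then conclude by induction over the precollision sequence. Your extra care about the infinite-collision case via $Z(t)$ and Lemma \ref{IntRep} is a harmless refinement of what the paper leaves implicit.
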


\begin{proof}
Recall that $f_{0}(\mathbf{v})=a_{0}(v_{1})\ b_{0}(v_{\perp })$. Next
consider any particle $(\mathbf{x},\mathbf{v})$ at a time $s$ for which $f(s,%
\mathbf{x},\mathbf{v})=a(s,x_{1},v_{1})\ b(s,x_{\perp },v_{\perp })$. Let $%
t>s$ be times such that there is no collision in the interval $[s,t]$. Then
by \eqref{eq:simplified characteristic} we have 
\begin{eqnarray*}
f(s,\mathbf{x},\mathbf{v}) &=&f(s,\check{\mathbf{x}}(s;t,x_{1},v_{1}),\check{%
\mathbf{v}}(s;t,x_{1},v_{1})) \\
&=&a(s,\check{x}_{1}(s;t,x_{1},v_{1}),\check{v}_{1}(s;t,x_{1},v_{1}))\ b(s,%
\check{x}_{\perp }(s;t,x_{\perp },v_{\perp }),\check{v}_{\perp
}(s;t,x_{\perp },v_{\perp })).
\end{eqnarray*}

On the other hand, consider a particle that collides at time t and for which
the incoming density has the form $f_{-}(t,\mathbf{x},\mathbf{v}%
)=a(t,x_{1},v_{1})\ b(t,x_{\perp },v_{\perp })$. Then 
\begin{eqnarray*}
f_{+}(t,\mathbf{x},\mathbf{v}) &=&\int_{(u_{1}-W(t))(v_{1}-W(t))\leq
0}K(v_{1}-W(t),u_{1}-W(t))f_{-}(t,\mathbf{x},u_{1},v_{\perp })\ du_{1} \\
&=&a_{+}(t,x_{1},v_{1})\ b(t,x_{\perp },v_{\perp })
\end{eqnarray*}%
where 
\begin{equation*}
a_{+}(t,x_{1},v_{1})=\int_{(u_{1}-W(t))(v_{1}-W(t))\leq
0}K(v_{1}-W(t),u_{1}-W(t))\ a_{-}(t,x_{1},u_{1})\ du_{1}.
\end{equation*}%
Thus the product structure is preserved under both the flow and the
collisions. The last statement of the lemma is clear because the vertical
component is unaffected by the collisions.
\end{proof}


\begin{lemma}
\label{R_W estimate} As long as $\gamma $ and $c_{G}$ are small enough, we
have 
\begin{equation}
|R_{W}(t)|\leq C\frac{\left( A\gamma ^{p+1}+\gamma +c_{G}\right) ^{p+1}}{%
(1+t)^{p+1}}  \label{eq:R_W estimate}
\end{equation}%
where $C$ is independent of $t,\gamma ,c_{G}$ and $A$.
\end{lemma}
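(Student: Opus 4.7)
The plan is to reduce $R_W(t)$ to an integral over particles that have had at least one precollision, show that on this set the velocity $v_1$ is trapped within an interval of size $O((A\gamma^{p+1}+\gamma+c_G)/(1+t))$ around $W(t)$, and then use the power-law smallness $L(u)\le C|u|^p$ together with this width to manufacture the factor $(1+t)^{-(p+1)}$.

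First I would use the splitting \eqref{eq:split of f-} together with the identity $f_{NB}(t,\mathbf{x},\mathbf{v})=f_0(\check{v}(0;t,\mathbf{x},\mathbf{v}))$, which is just the method of characteristics applied to \eqref{pde:free}. On the $\chi_0$ set the two terms exactly cancel, yielding
\begin{equation*}
R_W(t) = -\int_{D(t)} dS_{\mathbf{x}}\int_{\mathbb{R}^3} \chi_1(t,\mathbf{v})\,\tilde{L}(v_1-W(t))\bigl[f_+\bigl(\tau,\check{\mathbf{x}}(\tau),\check{\mathbf{v}}(\tau)\bigr)-f_0\bigl(\check{v}(0;t,\mathbf{x},\mathbf{v})\bigr)\bigr]\,d\mathbf{v}.
\end{equation*}
Lemma \ref{Lem:form of f} separates the $v_\perp$-dependence of the bracket into a factor $b_0(\check{v}_\perp(0;\cdot))$ that is bounded and, after the volume-preserving change $v_\perp\mapsto\check{v}_\perp(0;\cdot)$, integrates to a constant. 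The remaining $v_1$-dependent factor is bounded by a constant (using the $L^\infty$ control on $a_\pm$ furnished by \eqref{Kestimate} and the integrability assumption \eqref{KIntegral}).

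The heart of the argument is the width estimate. Because $\sigma>1$ under the hypothesis $\mu>1+1/p$, the bound $|W(s)-V_\infty|\le\gamma e^{-b_0 s}+A\gamma^{p+1}/(1+s)^\sigma$ is integrable with $\|W-V_\infty\|_{L^1(0,\infty)}\le C(\gamma+A\gamma^{p+1})$. I split on whether the last-precollision time $\tau$ lies in $[0,t/2]$ or $[t/2,t]$. In the first case the integral estimate $|\langle W\rangle_{\tau,t}-V_\infty|\le\frac{1}{t-\tau}\|W-V_\infty\|_{L^1}$ gives $|\langle W\rangle_{\tau,t}-V_\infty|\le C(\gamma+A\gamma^{p+1})/(1+t)$. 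In the second case the pointwise bound $|\langle W\rangle_{\tau,t}-V_\infty|\le\sup_{r\ge t/2}|W(r)-V_\infty|\le C(\gamma+A\gamma^{p+1})(1+t)^{-\sigma}\le C(\gamma+A\gamma^{p+1})/(1+t)$ does the job, using $\sigma>1$. Combined with the trapping $|v_1-\langle W\rangle_{\tau,t}|\le c_GC/\langle t\rangle$ from Lemma \ref{Lem:recollision v range} and the bound $|W(t)-V_\infty|\le C(\gamma+A\gamma^{p+1})/(1+t)$, this yields
\begin{equation*}
|v_1-W(t)|\le C\,\frac{A\gamma^{p+1}+\gamma+c_G}{1+t}\qquad\text{on }\{\chi_1\ne 0\}.
\end{equation*}

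To conclude, for $|u|\le\gamma+c_G$ assumption \eqref{Kestimate} together with $p\le 2$ gives $L(u)\le C|u|^p$. Integrating $|v_1-W|^p$ over a $v_1$-interval of length $O((A\gamma^{p+1}+\gamma+c_G)/(1+t))$ contributes an additional factor of the same magnitude, the surface integral over $D(t)$ and the (already handled) $v_\perp$-integration produce only constants, and multiplying gives the desired estimate \eqref{eq:R_W estimate}. The principal obstacle is exactly the uniform-in-$\tau$ width estimate: the regime $\tau$ close to $t$ is where the $L^1$-averaging argument degenerates, and it is precisely the assumption $\mu>1+1/p$ (hence $\sigma>1$) that allows the pointwise decay of $W-V_\infty$ to repair this. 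The other technical burden is to justify the uniform $L^\infty$ bound on $a_\pm$ needed to ignore the $v_\perp$ factor, which rests on iterating the collision representation of Lemma \ref{IntRep} together with the bounds on $K$.
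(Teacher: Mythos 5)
Your proposal follows essentially the same route as the paper's proof: reduce $R_W$ to the $\chi_1$ set where $f_--f_{NB}=\chi_1[f_+(\tau,\cdot)-f_0(\check v(0;\cdot))]$, use the product structure and a uniform bound on $a_+$ to control the integrand, trap $v_1$ in an interval of width $O\bigl((A\gamma^{p+1}+\gamma+c_G)/(1+t)\bigr)$ about $W(t)$ via Lemma \ref{Lem:recollision v range} together with the decay of $W-V_\infty$ (your two-case split $\tau\le t/2$ versus $\tau\ge t/2$, using $\sigma>1$, is precisely what justifies the averaging bound the paper states in one line), and then integrate $L(v_1-W(t))\le C|v_1-W(t)|^p$ over that interval to obtain the $(p+1)$ power. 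The one step you defer, the uniform $L^\infty$ bound on $a_+$, is established in the paper not by iterating Lemma \ref{IntRep} but by a one-step absorption argument --- defining $a_+^*$ as in \eqref{def:a+*} and showing $a_+^*\le C_1A(\gamma+c_G)\,a_+^*+C_2$, hence $a_+^*\le 2C_2$ once $A(\gamma+c_G)<(2C_1)^{-1}$ --- which is just the closed form of the geometric series your iteration would produce.
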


With a slightly finer proof, one can prove the better rate of decay $%
|R_{W}(t)|\leq C\frac{\left( A\gamma ^{p+1}+\gamma +c_{G}\right) ^{p+1}}{%
(1+t)^{p+d}}.$ However, by \eqref{sigma} we have $\sigma <p+1$ and hence $%
\sigma $ is a slower decay rate. So we will only bother to prove the $p+1$
decay rate for $|R_{W}(t)|$.

\begin{proof}
Without loss of generality, we may consider only the right side of the body
because the left side provides the same decay rate with the same proof. We
begin by proving an upper bound on $f_{+}$. We claim that 
\begin{equation}
b_{0}(\check{v}_{\perp }(0;t,x_{\perp },v_{\perp }))a_{+}^{\ast }\leqslant
2C_{2}b_{0}(\check{v}_{\perp }(0;t,x_{\perp },v_{\perp }))
\label{eq:upper bound for f+}
\end{equation}%
where%
\begin{equation}
a_{+}^{\ast }=\sup \left\{ a_{+}(\tau ,\xi ;u_{1})\text{ }|\text{ }\xi
=X(\tau )\text{, }\tau \in \left[ 0,\infty \right) \text{, and }u_{1}\in %
\left[ V_{\infty }-3\gamma ,V_{\infty }+3\gamma \right] \right\} .
\label{def:a+*}
\end{equation}%
Assuming $(\mathbf{x,v})\notin Z(t)$, let $t_{0}=t_{0}(t,\mathbf{x},\mathbf{v%
})$ be the first collision time after time $t$ and $\tau (t,\mathbf{x,v})$
be the last collision time before time $t$. We use (\ref{eq:split of f-}) to
split 
\begin{eqnarray}
f_{+}(t,\mathbf{x},\mathbf{v}) &=&\int_{u_{1}\leqslant
W(t)}K(v_{1}-W(t),u_{1}-W(t))\ \chi _{1}(t,u_{1},v_{\perp })
\label{eq:split with collision} \\
&&\times f_{+}(\tau ,\mathbf{\check{x}}(\tau ;t,\mathbf{x},u_{1},v_{\perp }),%
\check{v}_{1}(\tau ;t,\mathbf{x},u_{1},v_{\perp }),\check{v}_{\perp }(\tau
;t,\mathbf{x},u_{1},v_{\perp }))du_{1}  \notag \\
&&+\int_{u_{1}\leqslant W(t)}K(v_{1}-W(t),u_{1}-W(t))\ \chi
_{0}(t,u_{1},v_{\perp })  \notag \\
&&\times f_{0}(\check{v}_{1}(0;t,\mathbf{x},u_{1},v_{\perp }),\check{v}%
_{\perp }(0;t,\mathbf{x},u_{1},v_{\perp }))du_{1}  \notag \\
&=&I+II  \notag
\end{eqnarray}%
and have%
\begin{equation}
\left\vert v_{1}-\check{v}_{1}(t_{0};t,x_{1},v_{1})\right\vert \leqslant
c_{G}  \label{eq:1 good of collision}
\end{equation}%
and 
\begin{equation}
\inf_{s\leqslant t_{0}}\left\langle W\right\rangle _{s,t_{0}}-\frac{c_{G}C}{%
\left\langle t_{0}\right\rangle }\leqslant \check{v}%
_{1}(t_{0};t,x_{1},v_{1})\leqslant \sup_{s\leqslant t_{0}}\left\langle
W\right\rangle _{s,t_{0}}+\frac{c_{G}C}{\left\langle t_{0}\right\rangle }.
\label{eq:2 good of collision}
\end{equation}%
Here, (\ref{eq:1 good of collision}), which we have used many times, comes
from (\ref{eqn:characteristic}) and (\ref{eq:2 good of collision}) comes
from Lemma \ref{Lem:recollision v range}. We deduce from (\ref{eq:1 good of
collision}) and (\ref{eq:2 good of collision}) that 
\begin{equation}
\left\vert v_{1}-W(t)\right\vert \leqslant 2\left( \gamma +c_{G}\right)
<3\gamma  \label{eq:3 good of collision}
\end{equation}%
because we assumed $\gamma >2c_{G}$.

We can now estimate (\ref{eq:split with collision}). By Lemma \ref{Lem:form
of f}, 
\begin{eqnarray*}
I &=&\int_{u_{1}\leqslant W(t)}K(v_{1}-W(t),u_{1}-W(t))\ \chi
_{1}(t,u_{1},v_{\perp }) \\
&&\times a_{+}(\tau ,\check{x}(\tau ;t,x_{1},u_{1}),\check{v}_{1}(\tau
;t,x_{1},u_{1}))\ b_{0}(\check{v}_{\perp }(0;t,x_{\perp },v_{\perp }))du_{1}
\end{eqnarray*}%
and 
\begin{eqnarray*}
II &=&\int_{u_{1}\leqslant W(t)}K(v_{1}-W(t),u_{1}-W(t))\chi _{0}\
(t,u_{1},v_{\perp }) \\
&&\times a_{0}(\check{v}_{1}(0;t,x_{1},u_{1}))\ b_{0}(\check{v}_{\perp
}(0;t,x_{\perp },v_{\perp }))du_{1}.
\end{eqnarray*}%
For $I$, we use Lemma \ref{Lem:recollision v range} to deduce 
\begin{equation*}
I\leqslant b_{0}(\check{v}_{\perp }(0;t,x_{\perp },v_{\perp }))a_{+}^{\ast
}\int_{\inf_{\tau \leqslant t}\left\langle W\right\rangle _{\tau ,t}-\frac{%
c_{G}C}{\left\langle t\right\rangle }}^{W(t)}K(v_{1}-W(t),u_{1}-W(t))du_{1}.
\end{equation*}%
From the definition of $\mathcal{W}$, we deduce 
\begin{eqnarray}
W(t) &\leqslant &V_{\infty }+\frac{\left( C\gamma +A\gamma ^{p+1}\right) }{%
(1+t)^{\sigma }}\leqslant V_{\infty }+CA\gamma  \notag \\
\inf_{0<s<t}\langle W\rangle _{s,t} &\geq &V_{\infty }-\frac{1}{t}%
\int_{0}^{t}\frac{\left( C\gamma +A\gamma ^{p+1}\right) }{(1+s)^{\sigma }}%
ds\geqslant V_{\infty }-CA\gamma .  \label{estimate:infW}
\end{eqnarray}%
because $A\geqslant 1$. Hence%
\begin{equation*}
I\leqslant b_{0}(\check{v}_{\perp }(0;t,x_{\perp },v_{\perp }))C_{1}A\left(
\gamma +c_{G}\right) a_{+}^{\ast }
\end{equation*}%
because $K$ is bounded.

For the second term of (\ref{eq:split with collision}), we know as before
that $\left\vert \check{v}_{1}(0;t,X(t),u_{1})-u_{1}\right\vert <C$. Hence 
\begin{equation*}
II\leqslant b_{0}(\check{v}_{\perp }(0;t,x_{\perp },v_{\perp
}))C\int_{u_{1}\leqslant W(t)}K(v_{1}-W(t),u_{1}-W(t))\left\langle
u_{1}\right\rangle ^{-l_{1}}du_{1}
\end{equation*}%
since $a_{0}(v_{1})\leqslant {C}{\left\langle v_{1}\right\rangle ^{-l_{1}}}$%
. Noticing (\ref{eq:3 good of collision}), we use condition (\ref{KIntegral}%
) to deduce that 
\begin{equation*}
II\leqslant Cb_{0}(\check{v}_{\perp }(0;t,x_{\perp },v_{\perp }))\text{ for }%
v_{1}\in \left[ V_{\infty }-2\gamma ,V_{\infty }+2\gamma \right] .
\end{equation*}%
Thus (\ref{eq:split with collision}) becomes%
\begin{equation*}
f_{+}(t,\mathbf{x},\mathbf{v})\leqslant b_{0}(\check{v}_{\perp
}(0;t,x_{\perp },v_{\perp }))\ \{C_{1}A\left( \gamma +c_{G}\right)
a_{+}^{\ast }+C_{2}\}.
\end{equation*}%
Using Lemma \ref{Lem:form of f} on the left side of this inequality as well,
we have 
\begin{eqnarray*}
&&a_{+}(\tau ,\check{x}_{1}(\tau ;t,x_{1},u_{1}),\check{u}_{1}(\tau
;t,x_{1},u_{1}))b_{0}(\check{v}_{\perp }(0;t,x_{\perp },v_{\perp })) \\
&\leqslant &b_{0}(\check{v}_{\perp }(0;t,x_{\perp },v_{\perp }))\
\{C_{1}A\left( \gamma +c_{G}\right) a_{+}^{\ast }+C_{2}\}
\end{eqnarray*}%
so that 
\begin{eqnarray*}
b_{0}(\check{v}_{\perp }(0;t,x_{\perp },v_{\perp }))a_{+}^{\ast } &\leqslant
&b_{0}(\check{v}_{\perp }(0;t,x_{\perp },v_{\perp }))C_{1}A\left( \gamma
+c_{G}\right) a_{+}^{\ast } \\
&&+C_{2}b_{0}(\check{v}_{\perp }(0;t,x_{\perp },v_{\perp }))\text{.}
\end{eqnarray*}%
So choosing $\gamma $ so small that $A\left( \gamma +c_{G}\right)
<(2C_{1})^{-1}$, we have proven the claim (\ref{eq:upper bound for f+}).

We now estimate $R_{W}^{R}(t)$. By (\ref{eq:split of f-}), we have%
\begin{eqnarray*}
&&\left\vert f_{-}(t,\mathbf{x},\mathbf{v})-f_{NB}(t,\mathbf{x},\mathbf{v}%
)\right\vert \\
&=&\left\vert \chi _{1}(t,\mathbf{v})f_{+}(\tau ,\mathbf{\check{x}}(\tau ;t,%
\mathbf{x},\mathbf{v}),\check{v}(\tau ;t,\mathbf{x},\mathbf{v}))+\chi _{0}(t,%
\mathbf{v})f_{0}(\check{v}(0;t,\mathbf{x,v}))-f_{NB}(t,\mathbf{x},\mathbf{v}%
)\right\vert \\
&=&\chi _{1}(t,\mathbf{v})\Big|f_{+}(\tau ,\mathbf{\check{x}}(\tau ;t,%
\mathbf{x},\mathbf{v}),\check{v}(\tau ;t,\mathbf{x},\mathbf{v}))-f_{NB}(t,%
\mathbf{x},\mathbf{v})\Big| \\
&=&\chi _{1}(t,\mathbf{v})b_{0}(\check{v}_{\perp }(0;t,x_{\perp },v_{\perp
}))\Big|a_{+}(\tau ,\check{x}_{1}(\tau ;t,x_{1},v_{1}),\check{v}_{1}(\tau
;t,x_{1},v_{1}))-a_{0}(\check{v}_{1}(0;t,x_{1},v_{1}))\Big| \\
&\leqslant &C\chi _{1}(t,\mathbf{v})b_{0}(\check{v}_{\perp }(0;t,x_{\perp
},v_{\perp }))
\end{eqnarray*}%
where in the last line, we used (\ref{eq:upper bound for f+}). Hence 
\begin{eqnarray*}
\left\vert R_{W}^{R}(t)\right\vert &\leqslant
&\int_{D(t)}\int_{v_{1}\leqslant W(t)}L(v_{1}-W(t))\left\vert f_{-}(t,%
\mathbf{x},\mathbf{v})-f_{NB}(t,\mathbf{x},\mathbf{v})\right\vert d\mathbf{v}
\\
&\leqslant &C\int_{D(t)}\int_{v_{x}\leqslant W(t)}L(v_{1}-W(t))\chi _{1}(t,%
\mathbf{v})b_{0}(\check{v}_{\perp }(0;t,x_{\perp },v_{\perp }))d\mathbf{v} \\
&\leqslant &C\int_{D(t)}\int b_{0}(v_{\perp })dv_{\perp }\int_{\inf_{\tau
\leqslant t}\left\langle W\right\rangle _{\tau ,t}-\frac{c_{G}C}{%
\left\langle t\right\rangle }}^{W(t)}L(v_{1}-W(t))dv_{1} \\
&\leqslant &C\int_{\inf_{\tau \leqslant t}\left\langle W\right\rangle _{\tau
,t}-\frac{c_{G}C}{\left\langle t\right\rangle }}^{W(t)}L(v_{1}-W(t))dv_{1}
\end{eqnarray*}%
Recalling estimate (\ref{estimate:infW}), we then have by assumption on $K$
that 
\begin{eqnarray*}
\left\vert R_{W}^{R}(t)\right\vert &\leqslant &C\int_{V_{\infty }-\frac{1}{t}%
\int_{0}^{t}\eta (s)ds-\frac{c_{G}C}{\left\langle t\right\rangle }%
}^{W(t)}|v_{1}-W(t))|^{p}\ dv_{1} \\
&\leqslant &C\left\vert V_{\infty }-\frac{1}{t}\int_{0}^{t}\frac{\left(
C\gamma +A\gamma ^{p+1}\right) }{(1+s)^{\sigma }}ds-W(t)-\frac{c_{G}C}{%
\left\langle t\right\rangle }\right\vert ^{p+1} \\
&\leqslant &C\frac{\left( A\gamma ^{p+1}+\gamma +c_{G}\right) ^{p+1}}{%
(1+t)^{p+1}}
\end{eqnarray*}%
where in the last line we used the fact that 
\begin{equation*}
\frac{1}{t}\int_{0}^{t}\frac{\gamma +A\gamma ^{p+1}}{\langle s\rangle
^{\sigma }}ds\leqslant C\frac{\gamma +A\gamma ^{p+1}}{\left\langle
t\right\rangle }.
\end{equation*}
\end{proof}


\section{Proof of Theorem \protect\ref{ThExistence}\label{Sec:existence}}

We begin with the following elementary lemma.

\begin{lemma}
\label{lem:ODE lemma}Suppose 
\begin{equation*}
\frac{dY}{dt}=-b(t)Y+d(t)
\end{equation*}%
for $t\geq 0$, where $b(t)\geq b_{0}>0$ and $\left\vert d\left( t\right)
\right\vert \leq {C_{0}}{(1+t)^{-\sigma }}$ with $\sigma >1$. Then there
exists $C_{1}$ such that 
\begin{equation*}
\left\vert Y(t)\right\vert \leqslant |Y(0)|e^{-b_{0}t}+C_{1}(1+t)^{-\sigma }
\end{equation*}%
where $C_{1}=O(C_{0})$ as $C_{0}\rightarrow 0$.
\end{lemma}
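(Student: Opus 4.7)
The plan is to solve this scalar linear first-order ODE explicitly by an integrating factor and then estimate the resulting convolution by splitting the time interval in half. Setting $B(t)=\int_0^t b(s)\,ds$, variation of parameters gives
\begin{equation*}
Y(t)=e^{-B(t)}Y(0)+\int_0^t e^{-(B(t)-B(s))}\,d(s)\,ds.
\end{equation*}
Since $b(s)\geq b_0>0$, we have $B(t)\geq b_0 t$ and $B(t)-B(s)\geq b_0(t-s)$. Combined with the hypothesis $|d(s)|\leq C_0(1+s)^{-\sigma}$, this yields
\begin{equation*}
|Y(t)|\leq |Y(0)|\,e^{-b_0 t}+C_0\int_0^t e^{-b_0(t-s)}(1+s)^{-\sigma}\,ds.
\end{equation*}
The first term is already in the desired form, so the whole problem reduces to showing that the convolution integral is bounded by a constant (depending only on $b_0$ and $\sigma$) times $(1+t)^{-\sigma}$.

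For this I would split the integral at the midpoint $s=t/2$. On $[0,t/2]$ the exponential factor satisfies $e^{-b_0(t-s)}\leq e^{-b_0 t/2}$, and since $\sigma>1$ the integral $\int_0^{t/2}(1+s)^{-\sigma}\,ds$ is uniformly bounded in $t$; the prefactor $e^{-b_0 t/2}$ decays faster than any polynomial and is therefore absorbed into a constant multiple of $(1+t)^{-\sigma}$. On $[t/2,t]$ the weight satisfies $(1+s)^{-\sigma}\leq 2^\sigma(1+t)^{-\sigma}$, and $\int_{t/2}^t e^{-b_0(t-s)}\,ds\leq 1/b_0$, giving a contribution bounded by $2^\sigma b_0^{-1}(1+t)^{-\sigma}$. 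Adding the two pieces produces
\begin{equation*}
\int_0^t e^{-b_0(t-s)}(1+s)^{-\sigma}\,ds\leq \tilde C\,(1+t)^{-\sigma},
\end{equation*}
with $\tilde C$ depending only on $b_0$ and $\sigma$. Setting $C_1=\tilde C\,C_0$ finishes the proof, and the required $C_1=O(C_0)$ as $C_0\to 0$ is immediate since $C_0$ enters only as the linear prefactor of the forcing term.

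There is no genuine obstacle here; the only thing that requires any care is tracking the constants through the split so that the linear dependence on $C_0$ is visible, and that dependence is automatic from the structure of the Duhamel formula.
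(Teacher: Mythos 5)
Your proof is correct and follows essentially the same route as the paper: the same integrating factor $B(t)=\int_0^t b(s)\,ds$, the same Duhamel representation, and the same split of the convolution integral at $s=t/2$, with only a cosmetic difference in how the $[0,t/2]$ piece is bounded (you use $\sigma>1$ to bound $\int_0^{t/2}(1+s)^{-\sigma}\,ds$ by a constant, while the paper simply bounds $(1+s)^{-\sigma}\leq 1$ there; both yield an $e^{-b_0 t/2}$ factor that is absorbed into $C(1+t)^{-\sigma}$). The linear dependence $C_1=O(C_0)$ is tracked correctly in both arguments.
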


\begin{proof}
Let $B(t)=\int_{0}^{t}b(s)ds$. Then $\frac{d}{dt}[e^{B(t)}Y(t)]=e^{B(t)}d(t)$
so that 
\begin{equation*}
Y(t)=e^{-B(t)}Y(0)+e^{-B(t)}\int_{0}^{t}e^{B(s)}d(s)ds.
\end{equation*}%
Thus 
\begin{equation*}
|Y(t)|\leq |Y(0)|e^{-b_{0}t}+\int_{0}^{t}e^{-b_{0}(t-s)}C_{0}(1+s)^{-\sigma
}ds.
\end{equation*}%
Estimating $(1+s)^{-\sigma }\leq 1$ in $[0,t/2]$ and $(1+s)^{-\sigma }\leq
(1+t/2)^{-\sigma }$ in $[t/2,t]$, we find that 
\begin{equation*}
|Y(t)|\leq |Y(0)|e^{-b_{0}t}+\frac{C_{0}}{b_{0}}e^{-b_{0}t/2}+\frac{C_{0}}{%
b_{0}}\left( 1+\frac{t}{2}\right) ^{-\sigma }.
\end{equation*}
\end{proof}

\begin{lemma}
\label{coro:V_WinW} There exists a constant $A$ such that for small enough $%
\gamma $ and $c_{G}$, we have 
\begin{equation*}
\left\vert V_{W}(t)-V_{\infty }\right\vert <\gamma e^{-b_{0}t}+\frac{A\gamma
^{p+1}}{\left\langle t\right\rangle ^{\sigma }}.
\end{equation*}%
In other words, $V_{W}\in \mathcal{W}$.
\end{lemma}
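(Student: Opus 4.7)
The plan is to reduce the iteration \eqref{approxeqV} to a scalar linear ODE of the form handled by Lemma \ref{lem:ODE lemma}, and then close a bootstrap by choosing the constant $A$ large and the parameters $\gamma, c_G$ small in the correct order. Set $Y(t) = V_W(t) - V_\infty$, so that $Y(0) = V_0 - V_\infty$ with $|Y(0)| = \gamma$. Writing $E = F_{00}(V_\infty)$ and applying the mean value theorem,
\begin{equation*}
\frac{E - F_{00}(W(t))}{V_\infty - W(t)} = \frac{F_{00}(V_\infty) - F_{00}(W(t))}{V_\infty - W(t)} = F_{00}'(\xi(t))
\end{equation*}
for some $\xi(t)$ between $W(t)$ and $V_\infty$. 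Since $W \in \mathcal{W}$ and $A\gamma < 1$, we have $|W(t) - V_\infty| \leq 2\gamma < 3\gamma$, so $\xi(t) \in [V_\infty - 3\gamma, V_\infty + 3\gamma]$, and by Lemma \ref{Lem:F_00} the coefficient $b(t) := F_{00}'(\xi(t))$ satisfies $b(t) \geq b_0 > 0$. Using $V_\infty - V_W = -Y$, the iteration takes the form
\begin{equation*}
\frac{dY}{dt} = -b(t) Y(t) + d(t), \qquad d(t) := H(t) - R_W(t),
\end{equation*}
where $H(t) = F_{00}(W) - F_0(t, W)$ and $R_W$ are controlled in Theorem \ref{Theorem:H-summary} and Lemma \ref{R_W estimate}.

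Next I would estimate $|d(t)|$. Theorem \ref{Theorem:H-summary} gives $|H(t)| \leq c_G C \langle t \rangle^{-\sigma}$ and Lemma \ref{R_W estimate} gives $|R_W(t)| \leq C(A\gamma^{p+1} + \gamma + c_G)^{p+1}(1+t)^{-(p+1)}$. Because \eqref{sigma} forces $\sigma < p+1$, the $R_W$ term decays at least as fast as $(1+t)^{-\sigma}$. For $\gamma$ and $c_G$ small enough that $A\gamma^{p+1} + c_G \leq \gamma$ (requiring $A\gamma^p < 1$ and, say, $c_G \leq \gamma^{p+1}$), one has $(A\gamma^{p+1} + \gamma + c_G)^{p+1} \leq 2^{p+1} \gamma^{p+1}$, and therefore
\begin{equation*}
|d(t)| \leq \frac{C_0}{(1+t)^\sigma}, \qquad C_0 = C_* \bigl(\gamma^{p+1} + c_G\bigr),
\end{equation*}
for a constant $C_*$ independent of $A, \gamma, c_G, t$. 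Applying Lemma \ref{lem:ODE lemma} then produces
\begin{equation*}
|V_W(t) - V_\infty| \leq \gamma e^{-b_0 t} + \frac{C_1}{(1+t)^\sigma}, \qquad C_1 \leq \frac{C_*}{b_0}\bigl(\gamma^{p+1} + c_G\bigr).
\end{equation*}

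The closing step is a careful selection of constants. I would first fix $A$ large, specifically $A \geq 2C_*/b_0$, which pins $A$ in terms of the universal constants produced in the previous sections and the fixed quantity $b_0$. Then I would require $c_G \leq \gamma^{p+1}$, which makes $C_1 \leq (2C_*/b_0)\gamma^{p+1} \leq A\gamma^{p+1}$, giving exactly the bound defining $\mathcal{W}$. Lipschitz continuity of $V_W$ is automatic because the right-hand side of \eqref{approxeqV} is bounded. The main obstacle I anticipate is the ordering of the parameter choices: $A$ must be selected \emph{before} $\gamma$ and $c_G$, so that the $A$-dependence on the right-hand side of Lemma \ref{R_W estimate} is absorbed by the smallness of $\gamma^{p+1}$ rather than amplifying it; this is precisely what the preliminary constraint $A\gamma^{p+1} + c_G \leq \gamma$ enforces, and it is the reason the lemma is formulated as a bootstrap rather than as a direct estimate.
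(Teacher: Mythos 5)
Your proposal is correct and follows essentially the same route as the paper: the same reduction of \eqref{approxeqV} to $\dot Y=-b(t)Y+d(t)$ with $b(t)\geq b_0$ via the difference quotient of $F_{00}$, the same invocation of Theorem \ref{Theorem:H-summary} and Lemma \ref{R_W estimate} with $\sigma\le p+1$ to get $|d(t)|\le C_0\langle t\rangle^{-\sigma}$, and the same closing of the bootstrap by fixing $A$ in terms of the universal constants first and then shrinking $\gamma$ and $c_G$ (the paper uses $c_G<\tfrac12\gamma^{2p+1}$ where you use $c_G\le\gamma^{p+1}$, an immaterial difference).
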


\begin{proof}
We define 
\begin{equation*}
Y(t)=V_{W}(t)-V_{\infty },\quad b(t)=\frac{F_{00}(W(t))-F_{00}(V_{\infty })}{%
W(t)-V_{\infty }}
\end{equation*}%
and%
\begin{equation*}
d(t)=R_{W}(t)-F_{00}(W(t))+F_{0}(t,X(t),W(t)).
\end{equation*}%
We have $\left\vert Y(0)\right\vert =\gamma $ by definition. From Theorem %
\ref{Theorem:H-summary} and Lemma \ref{R_W estimate}, we have%
\begin{equation*}
\left\vert d(t)\right\vert \leqslant {c_{G}C}{\left\langle t\right\rangle
^{-\sigma }}+C\left( A\gamma ^{p+1}+\gamma +c_{G}\right) ^{p+1}\left\langle
t\right\rangle ^{-1-p}\newline
.
\end{equation*}%
Since $\sigma \leqslant 1+p$ by its definition (\ref{sigma}), we have%
\begin{equation*}
\left\vert d(t)\right\vert \leqslant {C\left( c_{G}+\left( A\gamma
^{p+1}+\gamma +c_{G}\right) ^{p+1}\right) \left\langle t\right\rangle
^{-\sigma }.}
\end{equation*}

Now we apply Lemma \ref{lem:ODE lemma} to obtain 
\begin{equation*}
\left\vert V_{W}(t)-V_{\infty }\right\vert \leqslant \gamma e^{-b_{0}t}+{%
C\left( c_{G}+\left( A\gamma ^{p+1}+\gamma +c_{G}\right) ^{p+1}\right)
\left\langle t\right\rangle ^{-\sigma }.}
\end{equation*}%
We choose $A>2C$ and $c_{G}<\frac{1}{2}\gamma ^{2p+1}$. We then have 
\begin{equation*}
C\left( c_{G}+\left( A\gamma ^{p+1}+\gamma +c_{G}\right) ^{p+1}\right)
<C\gamma ^{p+1}\left( \tfrac{1}{2}\gamma ^{p}+\left( A\gamma ^{p}+1+\tfrac{1%
}{2}\gamma ^{2p}\right) ^{p+1}\right) <A\gamma ^{p+1}
\end{equation*}%
by choosing $\gamma $ small enough, since $p>0$. That is,%
\begin{equation*}
\left\vert V_{W}(t)-V_{\infty }\right\vert <\gamma e^{-b_{0}t}+\frac{A\gamma
^{p+1}}{\left\langle t\right\rangle ^{\sigma }},
\end{equation*}%
as claimed.
\end{proof}

We now have 
\begin{equation*}
\frac{dV_{W}}{dt}=\frac{E-F_{0}(W)}{V_{\infty }-W}(V_{\infty }-V_{W})+d(t)
\end{equation*}%
with $V_{W}(0)=V_{0}$ and $\gamma =|V_{0}-V_{\infty }|$. Recall that $W\in 
\mathcal{W}$ means that $W(\cdot )$ is Lipschitz, $W(0)=V_{0}$ and $%
\left\vert W(t)-V_{\infty }\right\vert \leq \gamma e^{-b_{0}t}+\frac{A\gamma
^{p+1}}{\left\langle t\right\rangle ^{\sigma }}$. Given $L>0$, define 
\begin{equation*}
\mathcal{K}=\{W\in \mathcal{W}\ |\ esssup(|W(t)|+|\dot{W}(t)|)\leq L\}
\end{equation*}%
which is a convex and compact subset of $C_{b}([0,\infty ))$. Define the
operator $\mathcal{A}$ on $\mathcal{K}$ by $\mathcal{A}(W)=V_{W}$. A choice
of $L$ sufficiently large implies that $\mathcal{A}:\mathcal{K}\rightarrow 
\mathcal{K}$. In fact, 
\begin{eqnarray*}
\left\vert V_W(t)\right\vert &\leqslant &V_{\infty }+3\gamma \\
\left\vert \dot V_W(t)\right\vert &\leqslant &3\gamma \max_{V\in \left[
V_{\infty }-3\gamma ,V_{\infty }+3\gamma \right] }F_{00}^{\prime
}(V)+C\left( \left( \gamma +A\gamma ^{p+1}+c_{G}\right) ^{p+1}+c_{G}\right)
\end{eqnarray*}%
as already shown in the proof of Lemma \ref{coro:V_WinW}. The next lemma
will allow us to apply the Schauder fixed point theorem to deduce that $%
\mathcal{A}$ has a fixed point, which will complete the proof of existence
in Theorem \ref{ThExistence}. 

\begin{lemma}
If we provide $\mathcal{K}$ with the topology of $C_{b}([0,\infty ))$, then $%
\mathcal{A}$ is a continuous operator.
\end{lemma}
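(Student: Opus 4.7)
The plan is to show that if $W_n \to W$ in $C_b([0,\infty))$ with $W_n, W \in \mathcal{K}$, then $V_{W_n} \to V_W$ uniformly on $[0,\infty)$. The tail $t \ge T$ is handled uniformly in $n$ because Lemma \ref{coro:V_WinW} forces both $V_{W_n}, V_W \in \mathcal{W}$, so the triangle inequality and \eqref{def:W} give
$$|V_{W_n}(t) - V_W(t)| \le 2\gamma e^{-b_0 t} + \frac{2A\gamma^{p+1}}{\langle t\rangle^\sigma},$$
which can be made smaller than any prescribed $\varepsilon$ uniformly in $n$ by choosing $T$ large. It thus suffices to prove $V_{W_n} \to V_W$ uniformly on each fixed compact interval $[0, T]$.

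For this I would rewrite the ODE \eqref{approxeqV} as $dV_W/dt = -b_W(t)(V_W - V_\infty) + d_W(t)$, where $b_W(t) = (E - F_{00}(W))/(V_\infty - W)$ is bounded below by $b_0>0$ and $d_W(t) = F_{00}(W) - F_0(t, W) - R_W(t)$. Subtracting the equations for $V_{W_n}$ and $V_W$ and using the integrating factor $\exp(\int_0^\cdot b_{W_n})$ yields the Duhamel representation
$$V_{W_n}(t) - V_W(t) = \int_0^t \exp\Big(-\!\!\int_s^t b_{W_n}(\tau)\, d\tau\Big)\Big[(d_{W_n} - d_W)(s) + (b_W - b_{W_n})(s)(V_W(s) - V_\infty)\Big] ds.$$
Since $F_{00}$ is $C^1$ and $F_0(t,\cdot)$ is Lipschitz in its scalar argument (with constant depending only on $L$, $f_{NB}$ and the window $[V_\infty - 3\gamma, V_\infty + 3\gamma]$), the three differences $F_{00}(W_n) - F_{00}(W)$, $F_0(t, W_n) - F_0(t, W)$, and $(b_W - b_{W_n})(V_W - V_\infty)$ are each bounded pointwise by $C\|W_n - W\|_\infty$, hence tend to $0$ uniformly in $s$. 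Only $R_{W_n}(s) - R_W(s)$ remains delicate.

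The main obstacle is the continuity $W \mapsto R_W$, which is where the geometry of precollisions enters. The $W$-dependence of $R_W$ appears through the disk trajectory $X_W = \int_0^\cdot W$, the last-precollision time $\tau(t, \mathbf{x}, \mathbf{v}; W)$ together with the indicator $\chi_1$ from \eqref{eq:split of f-}, the reflection kernel $K(\cdot, u_1 - W(\tau))$, and the prefactor $L(v_1 - W(t))$. Uniform convergence $W_n \to W$ on $[0, T]$ gives $X_{W_n} \to X_W$ uniformly; combined with the fact that the characteristics $(\check{\mathbf{x}}, \check{\mathbf{v}})$ do not depend on $W$, this yields pointwise convergence $\tau(\cdot; W_n) \to \tau(\cdot; W)$ at every $(\mathbf{x}, \mathbf{v})$ outside the tangentially-hitting null set $Z(t)$ described at the start of Section \ref{Sec:recollisions}. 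Continuity of $K$, $L$, and the characteristics then gives pointwise a.e.\ convergence of the integrand defining $R_{W_n}(s)$ to that of $R_W(s)$, while the pointwise bound constructed in the proof of Lemma \ref{R_W estimate} serves as an $n$-uniform integrable majorant, so dominated convergence gives $R_{W_n}(s) \to R_W(s)$ pointwise in $s$, with uniform bound $C(1+s)^{-p-1}$. A second application of dominated convergence in the Duhamel formula above gives pointwise $V_{W_n}(t) \to V_W(t)$ on $[0, T]$, and the uniform Lipschitz bound $|\dot V_{W_n}| \le L$ built into $\mathcal{K}$ supplies the equicontinuity needed to upgrade pointwise to uniform convergence via Arzel\`a--Ascoli. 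The conceptual subtlety is that a single particle may collide infinitely many times, so one cannot track all precollisions simultaneously; the reduction to the \emph{last} precollision via \eqref{eq:split of f-} localizes the $W$-dependence of $f_-$ and is what makes the dominated-convergence argument go through.
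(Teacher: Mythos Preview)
Your tail argument and the ODE comparison on $[0,T]$ are fine, and the treatment of the $F_{00}$, $F_0$, and $b_W$ terms is correct. The gap is in the continuity of $W\mapsto R_W$. You claim that the last--precollision formula \eqref{eq:split of f-} ``localizes the $W$-dependence of $f_-$'', but it does not: the term $f_+(\tau,\check{\mathbf x},\check{\mathbf v})$ on the right of \eqref{eq:split of f-} is itself given by the boundary condition \eqref{EQ:BC} as an integral of $f_-(\tau,\cdot,u_1,v_\perp)$ over \emph{all} incoming $u_1$, and each such $f_-$ again carries the full earlier collision history with respect to $W$. So the recursion is merely pushed back one step, and your dominated--convergence argument for the integrand of $R_{W_n}$ begs the question: pointwise convergence of that integrand is precisely $f_-^{W_n}\to f_-^W$, which is what you are trying to prove. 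A uniform integrable majorant (which you do have from Lemma \ref{R_W estimate}) does not supply pointwise convergence.

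The paper closes this gap by a different decomposition. For each $t$ it splits phase space into $A_W^N(t)$ (at most $N$ precollisions in $[0,t]$) and its complement $B_W^N(t)$. On $B_W^N(t)$ the estimate behind Lemma \ref{R_W estimate} iterates to give
\[
\sup_{t\ge 0,\ W\in\mathcal W}\big|R_W\big(t;B_W^N(t)\big)\big|\le (C\gamma)^N,
\]
which, since $C\gamma<1$, is smaller than $\varepsilon/4$ for $N$ large, uniformly in $W$. On $A_W^N(t)$, with $t\in[0,S]$, one unfolds the boundary condition exactly $N$ times via Lemma \ref{IntRep}, obtaining a finite iterated integral in which $W$ appears in only finitely many places; ordinary continuity then gives uniform convergence on $[0,S]$. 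The truncation at $N$ collisions, powered by the $(C\gamma)^N$ smallness, is the missing idea in your argument.
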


\begin{proof}
We follow the proof in \cite[Lemma 7.1]{CS-1}, slightly modified. The term $%
H=H_{W}=F_{00}(W)-F_{0}(t,X,W)$ involves no collisions at all, while the
recollision force satisfies $\left\vert R_{W}\right\vert \leqslant C\gamma
\langle t\rangle ^{-\sigma }\leqslant C\gamma $.

For any $t>0$, we define $B_{W}^{N}(t)$ as the set of $(\mathbf{x},\mathbf{v}%
)$ such that the trajectory passing through $(t,\mathbf{x},\mathbf{v})$ has
collided with the body at least $N+1$ times in $[0,t]$, and we define $%
A_{W}^{N}(t)$ as its complement. Using the estimate \eqref{eq:R_W estimate}
that $\sup_{t\in \lbrack 0,\infty ),\ W\in \mathcal{W}}|R_{W}(t)|\leq
C\gamma $ and iterating this estimate $N$ times, we have 
\begin{equation}
\sup_{t\in \lbrack 0,\infty ),\ W\in \mathcal{W}}\left\vert
R_{W}(t;B_{W}^{N}(t))\right\vert \leq (C\gamma )^{N}.  \label{R_W iterated}
\end{equation}%
For any $\epsilon >0$, this expression is at most $\frac{\epsilon }{4}$ by
choosing $\gamma <1/C$ and $N=N_{\epsilon }$ large enough.

Now let $W_{j}\rightarrow W$ in $\mathcal{K}$. Given any time $S>0$, we may
write 
\begin{eqnarray*}
&&\sup_{0\leq t<\infty }\left\vert R_{W_{j}}(t)-R_{W}(t)\right\vert \\
&\leq &\sup_{S\leq t<\infty }\left\vert R_{W_{j}}(t)-R_{W}(t)\right\vert
+\sup_{0\leq t<S}\left\vert R_{W}(t;B_{W}^{N}(t))\right\vert +\sup_{0\leq
t<S}\left\vert R_{W_{j}}(t;B_{W_{j}}^{N}(t))\right\vert \\
&&+\sup_{0\leq t<S}\left\vert
R_{W_{j}}(t;A_{W_{j}}^{N}(t))-R_{W}(t;A_{W}^{N}(t))\right\vert \\
&=&I+II+III+IV.
\end{eqnarray*}%
By estimate \eqref{eq:R_W estimate}, we may choose $S=S_{\varepsilon }$ so
large that $\left\vert I\right\vert <\epsilon /4.$ By estimate 
\eqref{R_W
iterated}, we have $\left\vert II\right\vert +\left\vert III\right\vert \leq
2\epsilon /4.$

Now in $IV$ there are no more than $N$ collisions. Therefore we can express
both terms in $IV$ as iterates of at most $N$ integrals by repeated use of
the collision boundary condition. The resulting \textit{finite} number of
iterated integrals contain $W_{j}$ in a \textit{finite} number of places in
the expression $R_{W_{j}}(t;A_{W_{j}}^{N}(t))$. (See Lemma \ref{IntRep}.)
Therefore they converge as $j\rightarrow \infty $ to the same expression
with $W_{j}$ replaced by $W$, and the convergence is uniform for $t\in
\lbrack 0,S]$. Thus we can choose $j$ so large that $\left\vert
IV\right\vert <\epsilon /4$. So we conclude that $R_{W_{j}}(t)\rightarrow
R_{W}(t)$ in $C_{b}([0,\infty ))$.

It is also clear that 
\begin{equation*}
H_{W_{j}}\rightarrow H_{W}
\end{equation*}%
in $\mathcal{K}$ because \textit{no collisions are involved}. It follows
from the equation \eqref{approxeqV} for $V_{W}(t)$ that $\mathcal{A}$ is
continuous in the topology of $C_{b}([0,\infty ))$.
\end{proof}

So far we have proven the existence part of Theorem \ref{ThExistence}. We
now prove that \textit{every} solution in this sense satisfies (\ref{V decay}%
) with a simple argument. In fact, consider any solution $\left( \tilde{V}%
(t),\tilde{f}\right) $ that satisfies 
\begin{equation*}
\left\vert \tilde{V}(0)-V_{\infty }\right\vert =\gamma <\gamma +A\gamma
^{p+1}.
\end{equation*}%
Then 
\begin{equation}
\left\vert \tilde{V}(t)-V_{\infty }\right\vert <\gamma e^{-b_{0}t}+\frac{%
A\gamma ^{p+1}}{\left\langle t\right\rangle ^{\sigma }}  \label{decay}
\end{equation}%
for small $t$. Now suppose we have equality at some later time. Let $T$ be
the earliest such time. 
Then the existence of such a time $T$ contradicts Lemma \ref{coro:V_WinW}
because $\left( \tilde{V}(t),\tilde{f}\right) $ is a fixed point of the
mapping $\mathcal{A}$. Thus \eqref{decay} is valid for all $t<\infty $. 
\qed


\end{document}